\newtheorem{thm}{Theorem}[section]
\newtheorem{lem}[thm]{Lemma}
\newtheorem{cor}[thm]{Corollary}
\newtheorem{prop}[thm]{Proposition}
\theoremstyle{definition}
\newtheorem{defn}[thm]{Definition}
\newtheorem{rem}[thm]{Remark}
\newtheorem{exmp}[thm]{Example}
\begin{document}

\title[$\mathbb{Z}^n$-free groups are CAT(0)]
{$\mathbb{Z}^n$-free groups are CAT(0)}

\author[I.~Bumagin]{Inna Bumagin}
      \address{ School of Mathematics and Statisics\\
               Carleton University \\
               Ottawa, ON, Canada  K1S 5B6}
      \email{bumagin@math.carleton.ca}

\author[O.~Kharlampovich]{Olga Kharlampovich}
      \address{Department of Mathematics and Statisics\\
               McGill University \\
               Montreal, Quebec, Canada H3A 2K6 }
      \email{olga@math.mcgill.ca}
\thanks{Research supported by NSERC grant}

\subjclass[2010]{ 
20E08,   
20F65,  
20F67,   
57M07,   
57M10,   
57M20.  
}
\keywords{Groups acting on trees, CAT(0), $2$-complexes, relatively hyperbolic groups}

\begin{abstract}
We show that every group with free $\mathbb{Z}^n$-length function is CAT(0).
  \end{abstract}

\maketitle

\section{Introduction}
\label{sec:intro}
\begin{defn}\label{defn:Lambdafreegp} Let $\Lambda$ be an ordered abelian group. By a $\Lambda$-\emph{free} group we mean a finitely generated group $G$ equipped with a free Lyndon $\Lambda$-length function. We call $G$
a \emph{regular} $\Lambda$-\emph{free} group if the length function is free and regular (see section~\ref{sec:prelim} for the definitions).
\end{defn}

Length functions were introduced by Lyndon~\cite{Lyndon} (see also~\cite{LS77}) to generalize Nielsen methods for free groups. They generalize the notion of a word metric on a group and remain a valuable tool in the combinatorial  group theory. The Chiswell construction, introduced by Chiswell~\cite{ChiswellConstruction} for groups with $\mathbb{R}$-valued length functions and generalized by Morgan and Shalen~\cite{MS} to $\Lambda$-\emph{free} group for arbitrary ordered group $\Lambda$, relates length functions with group actions on $\Lambda$-trees. The construction allows one to use free group actions and free Lyndon length functions as two equivalent languages describing the same objects. We refer to the book \cite{ChiswellBook} by Chiswell for a detailed discussion on the subject.

Thus, $\Lambda$-\emph{free} groups can be thought of as groups acting freely on $\Lambda$-trees, and regular $\Lambda$-\emph{free} groups are precisely those acting with unique orbit of branch points~\cite{KharlampovichMyasnikov10}.

 Bass-Serre theory of groups acting freely on simplicial trees~\cite{Serre} implies in particular that free groups are precisely $\mathbb{Z}$-free groups in our terminology. According to Rips' theorem, a finitely generated group is $\mathbb{R}$-free if and only if it is a free product of free abelian groups and surface groups (with few exceptions)~\cite{GLP}. Free actions on $\mathbb{R}$-trees  cover all Archimedean actions, since every group acting freely on a $\Lambda$-tree for an Archimedean ordered abelian group $\Lambda$ acts freely also on an $\mathbb{R}$-tree.

Group actions on $\Lambda$-trees for an arbitrary ordered abelian group $\Lambda$ were introduced by Morgan and Shalen~\cite{MS} and studied by Alperin and Bass~\cite{AlperinBass}. Bass~\cite{Bass} proved a version of combination theorem for finitely generated groups acting freely on  ($\Lambda \oplus \mathbb{Z}$)-trees with respect to the right lexicographic order on $\Lambda \oplus \mathbb{Z}$; this was generalized by Martino and O'Rourke~\cite{MartinoRourke}. A structure theorem for finitely generated  groups acting freely on an ${\mathbb R}^n$-tree (with the lexicographic order) was proved by Guirardel~\cite{Guirardel}. The theorem states that a finitely generated $\mathbb{R}^n$-free group is the fundamental group of a graph of groups with cyclic edge subgroups and vertex groups that are finitely generated $\mathbb{R}^{n-1}$-free groups. The structure theorem, along with the combination theorem for relatively hyperbolic groups proved by Dahmani~\cite{Dahmani}, implies that finitely generated $\mathbb{R}^n$-free groups are hyperbolic relative to maximal abelian subgroups. Note that every $\mathbb{Z}^n$-free group is in particular $\mathbb{R}^n$-free, so that finitely generated $\mathbb{Z}^n$-free groups are relatively hyperbolic; we use this result below.

Kharlampovich, Myasnikov, Remeslennikov and Serbin proved necessary and sufficient conditions for a finitely generated group to be regular $\mathbb{Z}^n$-free~\cite{KharlampovichMyasnikov10}. In the present paper we use the 'only if' part - that is, a structure theorem~\cite[Theorem 7]{KharlampovichMyasnikov10} (see also theorems~\ref{thm:main2} and~\ref{thm:singleHNN} in the present paper). The structure theorem describes a finitely generated regular $\mathbb{Z}^n$-free group as a group obtained from a finitely generated free group by a sequence of finitely many HNN-extensions of a particular type. The same authors showed in~\cite{KharlampovichMyasnikov11} that every finitely generated $\mathbb{Z}^n$-free group embeds by a length-preserving monomorphism into a finitely generated regular $\mathbb{Z}^m$-free group, for some $m$. (Notice that by \cite{ChiswellMuller}, every  $\Lambda$-free group embeds by a length-preserving monomorphism into a  regular $\Lambda$-free group.) We use these theorems to prove our main result (cf. Theorems~\ref{thm:regularZnfreecoherent} and~\ref{thm:ZnfreeCAT0}):
\begin{thm}\label{thm:ZnfreeCAT0intro} A finitely generated $\mathbb{Z}^n$-free group $G$ acts properly discontinuously and cocompactly on a CAT(0) space. If the length function $l\colon G\rightarrow\mathbb{Z}^n$ is also regular then $G$ is the fundamental group of a geometrically coherent space; in particular, $G$ is coherent.
\end{thm}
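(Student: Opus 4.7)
The plan is to prove the two halves of the theorem in sequence, starting with the regular case (where both the CAT(0) assertion and the geometric coherence assertion hold) and then deducing the CAT(0) half in general via the length-preserving embedding of \cite{KharlampovichMyasnikov11}. The backbone of the regular argument is the structure theorem \cite[Theorem 7]{KharlampovichMyasnikov10}, which realizes a finitely generated regular $\mathbb{Z}^n$-free group $G$ as the top of a tower
\[
F = G_0 \leq G_1 \leq \cdots \leq G_k = G,
\]
where $F$ is a finitely generated free group and each $G_{i+1}$ is an HNN extension of $G_i$ whose associated subgroups are abelian of a very controlled form (essentially, the maximal abelian subgroups seen by the $\mathbb{Z}^n$-length function). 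All of my inductive work will be over this tower.

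I would prove by induction on $i$ that $G_i = \pi_1(Y_i)$ for a compact piecewise Euclidean, non-positively curved complex $Y_i$ (so that the universal cover $X_i$ is CAT(0)), and that $Y_i$ is geometrically coherent. For $i=0$ take a wedge of circles, whose universal cover is a tree. For the inductive step I would attach to $Y_i$ a ``generalized mapping torus'': realize each of the two associated abelian subgroups as $\pi_1$ of a flat subcomplex (a product of circles, thanks to the rank information carried by the regular length function), arrange that the isomorphism between them is realized by an isometry of these flat subcomplexes, and glue a flat tube between them. The main technical obstacle is verifying Gromov's link condition at every new vertex of $Y_{i+1}$; this amounts to showing that the way the abelian subgroups sit inside $Y_i$ is ``straight'' enough for their flat models to be glued isometrically, and this is exactly where the regular hypothesis, plus the fact from \cite{KharlampovichMyasnikov10} that these abelian subgroups are malnormal and visible in the length function, will be used.

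Geometric coherence is established along the same induction. For a finitely generated subgroup $K\leq G_{i+1}$, I would use the action of $G_{i+1}$ on its Bass-Serre tree to decompose $K$ as a graph of groups whose edge and vertex groups are intersections of $K$ with conjugates of the edge and vertex groups of the HNN decomposition; using relative hyperbolicity of $\mathbb{Z}^n$-free groups (cited in the introduction from Guirardel and Dahmani) to control the intersections with the abelian edge groups, and the inductive coherence of $G_i$ for the vertex intersections, I would realize $K$ as the fundamental group of a compact subcomplex of a cover of $Y_{i+1}$.

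For the general (possibly non-regular) finitely generated $\mathbb{Z}^n$-free $G$, I would invoke the length-preserving monomorphism $G\hookrightarrow H$ of \cite{KharlampovichMyasnikov11} into a regular $\mathbb{Z}^m$-free group $H$. The previous stage gives a CAT(0) space $X_H$ with proper cocompact $H$-action and hence a proper $G$-action on $X_H$. To upgrade this proper action to a proper \emph{cocompact} CAT(0) action, I would pass to a $G$-invariant convex subspace, taking the closed convex hull of a $G$-orbit (or, equivalently, the union of axes of the hyperbolic elements of $G$ and minimal invariant flats for its abelian subgroups); cocompactness of $G$ on this hull should follow from the facts that length-preservation of the embedding translates into preservation of translation lengths and of maximal abelian subgroups, and that $G$ is itself relatively hyperbolic with respect to those abelian subgroups. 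This convex-cocompactness transfer from $H$ to $G$ is the main delicate point of the non-regular case.
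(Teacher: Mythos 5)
Your plan for the non-regular case contains the most serious gap. You propose to transfer cocompactness from the regular group $H$ to the subgroup $G$ by taking the closed convex hull of a $G$-orbit in the CAT(0) space $X_H$ and arguing that $G$ acts cocompactly on it. This is a form of convex cocompactness that does not follow from the facts you cite: quasiconvexity (or preservation of translation lengths and of maximal abelian subgroups) does not in general give cocompactness of the action on a convex hull in a CAT(0) space, and you yourself flag it as ``the main delicate point'' without supplying an argument. The paper avoids this entirely: geometric coherence of the space $Y$ with $\pi_1(Y)=H$ means that the cover of $Y$ corresponding to the finitely generated subgroup $G$ admits a \emph{compact core} $C$ that is locally CAT(0) with $\pi_1(C)=G$; then $G$ acts properly discontinuously and cocompactly on the CAT(0) universal cover of $C$. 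In other words, the clause ``every finitely generated subgroup of $G$ is CAT(0)'' in Theorem~\ref{thm:regularZnfreecoherent} is exactly the device that makes the non-regular case immediate, and your outline does not use it.

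In the regular case your skeleton (induction up the HNN tower of Theorem~\ref{thm:singleHNN}, gluing flat tubes between tori realizing the associated abelian subgroups) matches the paper, but you gloss over the step that carries the real technical weight: to make the attaching maps local isometries (equivalently, to realize $\phi_i$ by an isometry of tori), the generator loops of $C_i$ and of $\phi_i(C_i)$ must have equal \emph{real} lengths in $Y_i$. The hypothesis $\ell(\phi_i(w))=\ell(w)$ is an equality in $\mathbb{Z}^n$, not in $\mathbb{R}$, so it does not directly produce a metric. The paper resolves this with the Key Lemma: after normalizing the generating set so that lengths of the relevant elements are sums of lengths of letters (Lemma~\ref{lem:reductioninnormalforms} and Theorem~\ref{thm:singleHNN}(6)), the required equalities become a finite existential system over $\mathbb{Z}^n$ in the language $\{0,+,-,<\}$, and Proposition~\ref{GK} (Gurevich--Kokorin) transfers its solvability to $\mathbb{Z}$, yielding positive integer edge weights for which the gluing maps are local isometries. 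Your appeal to ``the regular hypothesis plus malnormality'' points at the right place but does not supply this mechanism; without it the verification of the link condition (or of the hypotheses of Proposition~\ref{prop:gluing}) cannot be completed. Your graph-of-groups argument for coherence is essentially the graph-of-spaces argument of Theorem~\ref{thm:finitecore} in different language and is fine in outline.
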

We refer the reader to the book~\cite{BridsonHaefliger} for definitions and basic properties of CAT(0) spaces and groups acting on them.
 Recall that a group $G$ is called \emph{coherent} if every finitely generated subgroup of $G$ is finitely presented.  Definition~\ref{defn:coherentspace} of a geometrically coherent space can be found in section~\ref{sec:gluing}. The coherence of regular $\mathbb{Z}^n$-free groups can also be deduced from the structure theorem~\cite[Theorem 7]{KharlampovichMyasnikov10}, using the Bass-Serre theory.

Since $\mathbb{Z}^n$-free groups are relatively hyperbolic, the criterion~\cite[Theorem 1.2.1]{HruskaKleiner} proved by Hruska and Kleiner applies, and we conclude that the CAT(0) space in theorem~\ref{thm:ZnfreeCAT0intro} has isolated flats in the sense of Hruska~\cite{HruskaThesis}:

\begin{thm}\label{thm:CAT0isolflats} A finitely generated $\mathbb{Z}^n$-free group $G$ acts properly discontinuously and cocompactly on a CAT(0) space with isolated flats.
\end{thm}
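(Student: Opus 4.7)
The plan is essentially to package together two ingredients that have already appeared in the introduction: the geometric CAT(0) action constructed in Theorem~\ref{thm:ZnfreeCAT0intro}, and the relatively hyperbolic structure that $\mathbb{Z}^n$-free groups inherit from Guirardel's structure theorem together with Dahmani's combination theorem. The role of Hruska--Kleiner~\cite{HruskaKleiner} is to provide the bridge: their Theorem~1.2.1 asserts that if a group acts geometrically on a CAT(0) space and is relatively hyperbolic with respect to a collection of virtually abelian subgroups, then the space is (equivariantly) CAT(0) with isolated flats in the sense of Hruska~\cite{HruskaThesis}.

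The first step is to invoke Theorem~\ref{thm:ZnfreeCAT0intro} to obtain a CAT(0) space $X$ on which $G$ acts properly discontinuously and cocompactly. The second step is to record the peripheral structure: as noted in the introduction, every finitely generated $\mathbb{Z}^n$-free group is in particular $\mathbb{R}^n$-free, so by the Guirardel/Dahmani argument it is hyperbolic relative to its maximal non-cyclic abelian subgroups. Because $G$ is $\mathbb{Z}^n$-free, every abelian subgroup embeds in $\mathbb{Z}^n$ and is therefore a finitely generated free abelian group; in particular the peripheral subgroups are virtually abelian, which is precisely the hypothesis needed to apply the Hruska--Kleiner criterion. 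Combining these two inputs yields isolated flats on $X$, which is the content of the theorem.

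The main obstacle, such as it is, lies not in any new argument but in checking that the peripheral collection used to establish relative hyperbolicity of $G$ really matches the collection for which Hruska--Kleiner's criterion is formulated. Concretely, one must verify that the maximal abelian subgroups coming from the Guirardel decomposition coincide (up to the almost-malnormal closure required in the definition of relative hyperbolicity) with the stabilizers of maximal flats in $X$, and that all of them are virtually $\mathbb{Z}^k$ for some $k\ge 2$ (rank-one peripherals can be absorbed into the hyperbolic part and do not affect the isolated-flats conclusion). Once this bookkeeping is done, Theorem~\ref{thm:CAT0isolflats} follows directly, with no further construction required.
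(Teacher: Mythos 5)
Your proposal is correct and follows exactly the paper's own route: the geometric CAT(0) action from Theorem~\ref{thm:ZnfreeCAT0intro} combined with the relative hyperbolicity of $\mathbb{Z}^n$-free groups (via Guirardel and Dahmani) and the Hruska--Kleiner criterion \cite[Theorem 1.2.1]{HruskaKleiner}. The additional bookkeeping you flag about matching the peripheral collection to the flat stabilizers is sensible due diligence but does not change the argument, which the paper treats as an immediate corollary.
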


It is unknown, whether all hyperbolic groups are CAT(0). Consequently, the question is open for groups hyperbolic relative to abelian parabolic subgroups. Even the case of $\mathbb{R}^n$-free groups with $n\geq 2$ is open, still.

Kharlampovich and Myasnikov proved in~\cite{KharlampovichMyasnikov08} that every finitely generated fully residually free (or limit) group acts freely on a $\mathbb{Z}^n$-tree for some $n$. These groups form a proper subclass of the class of $\mathbb{Z}^n$-free groups, as the following example shows.
\begin{exmp} $G=\langle x_1,x_2,x_3\mid x_1^2x_2^2x_3^2 = 1\rangle$ is an example of a $\mathbb{Z}^2$-free group which is not fully residually free. Indeed, since $G$ splits as the amalgamated product of 2-generated free groups:
\[
G=\langle x_1,x_1^2x_2\rangle\ast_{\langle x_1^2x_2=(x_2x_3^2)^{-1}\rangle}\langle x_2x_3^2,x_3\rangle,
\]
by Bass' Theorem~\cite{Bass} it is $\mathbb{Z}^2$-free. To show that $G$ is not (fully) residually free, note that in a free group, $x_1^2x_2^2$ is not a proper square unless $[x_1,x_2]=1$.
\end{exmp}

For limit groups theorem~\ref{thm:CAT0isolflats} was proven by Alibegovic and Bestvina~\cite{AlibegovicBestvina}. We adapt their approach to the more general situation.

Notice that ${\mathbb Z}^n$-free groups possess better algorithmic properties than relatively hyperbolic groups with abelian parabolics do in general. For example, by Nikolaev's result \cite{Nikol}, ${\mathbb Z}^n$-free groups have decidable membership problem for finitely generated subgroups while there are examples of hyperbolic groups where this problem is undecidable.

\medskip

The paper is organized as follows. Section~\ref{sec:prelim} contains definitions as well as basic properties of free actions on $\mathbb{Z}^n$-trees. The results presented in this section can be found in~\cite{KharlampovichMyasnikov10}, where the authors use the language of infinite words. In Section~\ref{sec:structure} we construct a weighted word metric on a given regular $\mathbb{Z}^n$-free group $G$, to be used in the proof of theorem~\ref{thm:ZnfreeCAT0intro} in Section~\ref{sec:gluing}.


%
\section{Length functions and actions}
\label{sec:prelim}

\subsection{Lyndon length functions and free actions on trees}
\label{subsec:Lyndon}

Let $G$ be a group and $\Lambda$ an ordered
abelian group. Then a function $l: G \rightarrow \Lambda$ is called a {\it
(Lyndon) length function} on $G$ if the following conditions hold:
\begin{enumerate}
\item [(L1)] $\forall\ g \in G:\ l(g) \geqslant 0$ and $l(1)=0$;
\item [(L2)] $\forall\ g \in G:\ l(g) = l(g^{-1})$;
\item [(L3)] $\forall\ g, f, h \in G:\ c(g,f) > c(g,h)
\rightarrow c(g,h) = c(f,h)$,

\noindent where $c(g,f) = \frac{1}{2}(l(g)+l(f)-l(g^{-1}f))$.
\end{enumerate}
Sometimes we refer to length functions with values in $\Lambda$ as to $\Lambda$-length functions.


While $c(g,f)$ may not belong to $\Lambda$ in general, we are interested in the case when it does.
\begin{defn} \textbf{(Free Length Function)}
A length function $l:G \rightarrow A$ is called {\em free} if it satisfies the following two axioms.
\begin{enumerate}
\item [(L4)] $\forall\ g, f \in G:\ c(g,f) \in \Lambda.$
\item [(L5)] $\forall\ g \in G:\ g \neq 1 \rightarrow l(g^2) > l(g).$
\end{enumerate}
\end{defn}

Moreover, the following particular case is of crucial importance.
\begin{defn} \textbf{(Regular Length Function)} A length function $l: G \rightarrow \Lambda$ is  called {\it regular} if it satisfies the {\it regularity} axiom:
\begin{enumerate}
\item [(L6)] $\forall\ g, f \in G,\ \exists\ u, g_1, f_1 \in G:$
$$g = u g_1,\ l(g)=l(u)+l(g_1);\  f = u f_1,\ l(f)=l(u)+l(f_1); \ l(u) = c(g,f).$$
\end{enumerate}
\end{defn}
Many examples of groups with regular free length functions are given in \cite{KharlampovichMyasnikov10}.

\subsection{$\Lambda$-trees}

Group actions on $\Lambda$-trees provide a geometric counterpart to $\Lambda$-length functions. To explain we need the following definitions, which we also use later.

Let $X$ be a non-empty set, $\Lambda$ an ordered abelian group. An $\Lambda$-{\em metric on} $X$ is a mapping
$d\colon X \times X\longrightarrow \Lambda$ that satisfies the usual axioms of a metric.
The pair $(X,d)$ is an {\em $\Lambda$-metric space}. If $(X,d)$ and
$(X',d')$ are $\Lambda$-metric spaces, an {\it isometry} from
$(X,d)$ to $(X',d')$ is a mapping $f: X \rightarrow X'$ such that
$d(x,y) = d'(f(x),f(y))$ for all $x,y \in X$.

For elements $a,b \in \Lambda$  the {\it closed segment} $[a,b]$ is
defined by
$$[a,b] = \{c \in \Lambda \mid a \leqslant c \leqslant b \}.$$
More generally, a {\it segment} in an $\Lambda$-metric space $X$ is the image of an
isometry $$\alpha: [a,b] \rightarrow X$$ for some $a,b \in
\Lambda$. We denote the segment by $[\alpha(a), \alpha(b)]$; $\alpha(a)$ and $\alpha(b)$ are its endpoints. The length of the segment is $d(\alpha(a),\alpha(b))=b-a$.  A $\Lambda$-metric space $(X,d)$ is
{\it geodesic} if for all $x,y \in X$, there is a segment of length $d(x,y)$ in $X$ with the endpoints $x,y$.

A {\em $\Lambda$-tree} is a $\Lambda$-metric space $(X,d)$ such that:
\begin{enumerate}
\item[(T1)] The space $(X,d)$ is geodesic,
\item[(T2)] If two  segments in $X$ intersect in a single point, which
is an endpoint of both, then their union is a segment,
\item[(T3)] The intersection of two segments with a common endpoint is also a segment.
\end{enumerate}

\subsection{Actions on trees}
From now on, we assume for simplicity that $\Lambda$ is a discrete abelian group. Let a group $G$ act on a $\Lambda$-tree $T$ by isometries. Recall that an element $g\in G$ is called \emph{elliptic} if $g$ fixes a point in $T$, $g$ is an \emph{inversion} if $g$ does not fix a point in $T$ but $g^2$ does, and $g$ is \emph{hyperbolic} if $g$ is neither an elliptic element, nor an inversion.

 An action of $G$ on $X$ is termed {\em free} if for every $1\neq g \in G$ neither  $g$ nor $g^2$ has a fixed point in $X$; in other words, an action of $G$ on $T$ is free if and only if every nontrivial element of $G$ is hyperbolic.

Let a group $G$ act on a $\Lambda$-tree $T$ with a base point $x$, and let the length function $l_x\colon G\rightarrow \Lambda$ based at $x$ be defined as follows:
$l_x(g) = d(x,gx),\forall\ g \in G$. If $l_x$ is a free regular Lyndon length function on $G$ then the action is free, and, according to~\cite[Lemma 1]{KharlampovichMyasnikov10}, $\forall g,f\in G$ $\exists w\in G$ so that $[x,gx]\cap[x,fx]=[x,wx]$. The element $w$ is called the \emph{common beginning} of $f$ and $g$, denoted by $com(f,g)=w$.

 Along with the notion of a length function $l\colon G\rightarrow \Lambda$ we use the \emph{translation length function }$l_T\colon G\rightarrow \Lambda$, defined as follows:
\[
l_T(g)=\min_{t\in T}d(t,gt).
\]
The minimum is always attained, and $g$ is elliptic iff $l_T(g)=0$. Every hyperbolic element $g\in G$ has an \emph{axis} which we denote by $A_g$. %
%
The axis of $g$ is the set of all the points in $T$ that $g$ moves by the shortest possible distance:
\[
A_g=\{t\in T\mid d(t,gt)=l_T(g)\}.
\]

Note that while the values $l(g)$ of the length function depend on the choice of a base point $x\in T$, the values of the translation length function $l_T(g)$ do not. For every hyperbolic element $g\in G$, $l(g)\geq l_T(g)$, and $l(g)=l_T(g)$ iff $x\in A_g$.

Unlike $l(g)$, the translation length is a conjugacy class invariant, so that
\[
l_T(a^{-1}ga)=l_T(g), \ \forall a\in G.
\]


\begin{lem} \label{lem:axesthroughx} Let $G$ act freely on a $\Lambda$-tree $T$, and let us fix a base point $x\in T$. If the associated length function $l\colon G\rightarrow \Lambda$ is regular then every $g\neq 1$ in $G$ is conjugate to an element $f\in G$ so that $x\in A_f$.
\end{lem}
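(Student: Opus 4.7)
My plan is as follows. Since the action of $G$ on $T$ is free, every nontrivial $g \in G$ is hyperbolic and has an axis $A_g$. Let $p$ denote the projection of $x$ onto $A_g$, i.e., the unique closest point of $A_g$ to $x$. My goal is to produce $u \in G$ with $ux = p$; then $f := u^{-1}gu$ will satisfy $x \in u^{-1}A_g = A_f$, which is exactly what the lemma asserts.

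To locate such a $u$, I would first identify $d(x, p)$ with a group-theoretic quantity. The tripod on $\{x, gx, g^{-1}x\}$ branches at $p$, so $[x, gx] \cap [x, g^{-1}x] = [x, p]$. Reading off the segment geometry along the axis gives $l(g) = 2 d(x, p) + l_T(g)$ and $l(g^2) = 2 d(x, p) + 2 l_T(g)$, whence
\[
c(g, g^{-1}) \;=\; l(g) - \tfrac{1}{2} l(g^2) \;=\; \tfrac{1}{2}(l(g) - l_T(g)) \;=\; d(x, p).
\]

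The key step is then to apply the regularity axiom (L6) to the pair $(g, g^{-1})$. This yields $u \in G$ with $g = u g_1$, $g^{-1} = u f_1$, and $l(u) = c(g, g^{-1}) = d(x, p)$, where all the lengths are additive. The additivity $l(g) = l(u) + l(u^{-1}g)$ translates (after translating by $u^{-1}$) to $d(x, gx) = d(x, ux) + d(ux, gx)$, forcing $ux$ to lie on the segment $[x, gx]$; by the symmetric computation, $ux$ also lies on $[x, g^{-1}x]$. Hence $ux \in [x, gx] \cap [x, g^{-1}x] = [x, p]$, and since $d(x, ux) = l(u) = d(x, p)$ I conclude $ux = p \in A_g$. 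Setting $f = u^{-1}gu$ then completes the argument.

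The main obstacle is precisely this last use of regularity. Axiom (L4) alone produces $c(g, g^{-1})$ as an element of $\Lambda$, but only regularity (L6) guarantees a group element $u$ whose action on $x$ actually realizes the branching point $p$ of the tripod; without (L6) the projection $p$ might fail to lie in the orbit $Gx$ and the whole strategy collapses. Once regularity provides $u$, the geometric identification $ux = p$ is forced by the uniqueness of the branching point in a $\Lambda$-tree, so the remaining verification is routine.
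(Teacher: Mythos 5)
Your proof is correct and follows essentially the same route as the paper: identify the projection of $x$ onto $A_g$ with the branch point of the tripod on $\{x,gx,g^{-1}x\}$, note that its distance from $x$ is $c(g,g^{-1})$, and invoke regularity to realize that point as $ux$ for some $u\in G$, then conjugate. You simply spell out in more detail why the element $u$ furnished by (L6) actually satisfies $ux=p$, which the paper leaves implicit (and your $f=u^{-1}gu$ fixes a small conjugation-direction slip in the paper's $A_{ugu^{-1}}$).
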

\begin{proof} Assume that $x\notin A_g$. Let $y\in A_g$ be the point on $A_g$ closest to $x$. Note that $[x,y]$=$[x,gx]\cap[x,g^{-1}x]$, so that $d(x,y)=c(g,g^{-1})$. By the regularity of $l$, there is $u\in G$ so that $y=ux$. Then $x=u^{-1}y$, and hence $x\in u^{-1}A_g = A_{u^{-1}gu}$. We set $f=u^{-1}gu$ to obtain the claim.
\end{proof}

In what follows, we will only deal with those representatives of conjugacy classes that are shortest with respect to $l$; Lemma~\ref{lem:axesthroughx} gives rise to the following definition.
\begin{defn}\label{defn:cyclicallyreduced}
We say that a hyperbolic element $g$ of a regular $\Lambda$-free group $G$ is \emph{cyclically reduced} if $[x,gx]\cap[x,g^{-1}x]=\{x\}$.
\end{defn}
If, for instance, $F$ is a finitely generated free group acting freely on its Cayley graph $T$, the associated length function is clearly regular. In this case, our notion of a cyclically reduced element coincides with the standard notion~\cite{LS77}. Without loss of generality, in this case we can assume that the base point of $T$ corresponds to the identity of $F$.

\subsection{Basic properties of $\mathbb{Z}^n$-free groups}
From  now on, we consider free actions on $\mathbb{Z}^n$-trees, so that the length function $l$ is free and takes its values in $\mathbb{Z}^n$ with the right lexicographic order. If $n>1$, so that the length is not Archimedean, following~\cite{KharlampovichMyasnikov10}, we introduce the notion of height, as follows.
\begin{defn}\label{defn:height} \textbf{(Height of element)} Let $l\colon G\rightarrow\mathbb{Z}^n$ be a length function. By the \emph{height} of an element $g\in G$, denoted $ht(g)$, we mean the number of the rightmost non-zero component of $l(g)$. If $l(g)=0$ then we say that $ht(g)=0$. By the height of a set $S\subset G$ we mean the maximum height of its elements: $$ht(S)=\max_{g\in S}\{ht(g)\}. $$
\end{defn}
If $ht(g)>ht(f)$ for some $g,f\in G$ then we say that $l(g)$ is infinitely larger than $l(f)$ (or $l_T(g)$ is infinitely larger than $l_T(f)$), and write $l(g)\gg l(f)$ (or $l_T(g)\gg l_T(f)$). We call $l(g)$ and $l(f)$ (or $l_T(g)$ and $l_T(f)$) \emph{comparable} if $ht(g)=ht(f)$, so that none of the two lengths is infinitely larger than the other.


Let $F$ be a free group - that is, $F$ is $\mathbb{Z}$-free. If $u\in F$ is cyclically reduced and $sus^{-1}=v$ for some $v,s\in F$, then $l(v)>l(u)$, unless $s=\hat{s}u^k$, where $\hat{s}^{-1}$ is an initial subword of $u$ and $k$ is an integer, hence $v$ is a cyclic permutation of $u$ and $l(v)=l(u)$. In Lemma~\ref{lem:axisofstableletter} below we study generalizations of this.

%
\begin{lem}\label{lem:axisofstableletter} Let $G$ be a $\mathbb{Z}^n$-free group, and let $u$ and $v$ be cyclically reduced elements of $G$ such that $l(u)=l(v)$. Suppose that $sus^{-1}=v$ for some nontrivial element $s\in G$. Then one of the following holds:
\begin{enumerate}
 \item[Case 1.] \label{e:shorts}  $ht(s)\leq ht(u)$. Then there is $\hat{s}\in s \langle u\rangle$ so that $[x,\hat{s}^{-1}x]\subset[x,ux]$ and $[x,\hat{s}x]\subset[x,v^{-1}x]$.
 \item[Case 2.] \label{e:longs} $ht(s) > ht(u)$. Then $[x,v^px]\subset [x,sx]$ and $[x,u^{-p}x]\subset [x,s^{-1}x]$, either for all non-negative integers $p$, or for all non-positive integers $p$.
\end{enumerate}
\end{lem}
\begin{proof} The assumption that both $u$ and $v$ are cyclically reduced and conjugate in $G$ implies that the axes $A_u$ of $u$ and $A_v$ of $v$ contain the base point $x$, and
$$l(u)=l_T(u)=l_T(v)=l(v).$$ In particular, $A_v=A_{sus^{-1}}=sA_u$ contains both $x$ and $sx$.

\vspace{.2cm}
\emph{Case 1}, $ht(s) \leq ht(u)$. In this case, for some integer $k$,
\begin{equation}\label{e:shortsinclusion}
x\in[su^kx,su^{m}x]=[su^ks^{-1}(sx),su^{m}s^{-1}(sx)],
\end{equation}
where $m=k+1$ if $k> 0$, $m=k-1$ if $k<0$, and $m$ can have either of the two values if $k=0$.
If $k\geq 0$ and $s^{-1}x\in[u^kx,u^{k+1}x]$, we set $\hat{s}=su^k$. If $k\leq 0$ and $s^{-1}x\in[u^{k-1}x,u^kx]$ then set $\hat{s}=su^{k-1}$. In any case, $[x,\hat{s}^{-1}x]\subset[x,ux]$. Note that in a free group this means that $\hat{s}^{-1}$ is an initial subword of $u$, and hence $v$ is a cyclic permutation of $u$.

Note that $\hat{s}u\hat{s}^{-1}=sus^{-1}=v$, and so $\hat{s}x\in A_v$, by the same argument as above. Since $d(x,\hat{s}^{-1}x)=d(\hat{s}x,x)$, necessarily $[x,\hat{s}x]\subset[x,v^{\varepsilon}x]$ with $\varepsilon=1$ or $-1$. However, $\hat{s}u=v\hat{s}$; in particular, $(\hat{s}u)x=(v\hat{s})x$. We compute
\[
d(x,(\hat{s}u)x)=d(\hat{s}^{-1}x,ux)=l(u)-d(\hat{s}^{-1}x,x)=l(u)-d(\hat{s}x,x)=l(v)-d(\hat{s}x,x).
\]
Since $d(x,(v\hat{s})x)=l(v)-d(\hat{s}x,x)$ if $\varepsilon=-1$, and $d(x,(v\hat{s})x)=l(v)+d(\hat{s}x,x)$ if $\varepsilon=1$, we obtain the claim.


\vspace{.2cm}
\emph{Case 2}, $ht(s)>ht(u)=ht(v)$. Since $A_v$ contains both $x$ and $sx$, it follows that either $[x,vx]\subset[x,sx]$, or $[x,v^{-1}x]\subset[x,sx]$. Note that for any integer $p$, $d(x,v^px)\ll d(x,sx)$. Hence, $[x,v^px]\subset [x,sx]$ either for all non-negative integers $p$, or for all non-positive integers $p$. Since $u=s^{-1}vs$, similar arguments show that
$[x,u^qx]\subset [x,s^{-1}x]$ either for all non-negative integers $q$, or for all non-positive integers $q$.

Let $s$ and $u$ commute, then $v=u$ and also $A_s=A_u$ (see Lemma~\ref{lem:reducedcentralizer} for the latter equality). Assume that $[x,u^{\varepsilon}x]\subset[x,sx]$ where $\varepsilon=1$ or $-1$. Then $[x,u^{\varepsilon}x]\subset[x,s^{-1}x]$ would mean that $s$ is not cyclically reduced and would contradict the equality $A_s=A_u$. Necessarily we have that $[x,u^{-\varepsilon}x]\subset[x,s^{-1}x]$, and the claim follows.

It remains to show that $q=-p$ also when $s$ and $u$ do not commute. Let $[x,vx]\subset[x,sx]$, we write this as $[x,sus^{-1}x]\subset[x,sx]$, which amounts to
$ [s^{-1}x,us^{-1}x]\subset[s^{-1}x,x]$. In particular, $us^{-1}x\in[s^{-1}x,x]$. If we suppose now that $[x,ux]\subset[x,s^{-1}x]$, so that $ux\in[x,s^{-1}x]$, then $[ux,us^{-1}x]\subsetneqq[x,s^{-1}x]$, which implies that $d(x,s^{-1}x)=d(ux,us^{-1}x)<d(x,s^{-1}x)$, a contradiction. Necessarily, we have that $[x,u^{-1}x]\subset[x,s^{-1}x]$, so that $[x,v^px]\subset [x,sx]$ implies $[x,u^{-p}x]\subset[x,s^{-1}x]$ for all non-negative integers $p$. The case when $p$ is non-positive is similar.
%
\end{proof}


\subsection{Abelian Subgroups.}\label{sec:abeliansbgps}

\begin{lem}\label{lem:reducedcentralizer} Let $G$ be a $\mathbb{Z}^n$-free group, $u,s\in G$ be nontrivial. If $u$ and $s$ commute then $A_u=A_s$. In particular, $u$ is cyclically reduced if and only if $s$ is cyclically reduced.
\end{lem}
For a proof see~\cite[Ch.3, Lemma 3.9(2)]{ChiswellBook}. Note that the converse of Lemma~\ref{lem:reducedcentralizer} also holds for free actions because, if  $A_u=A_s$ then the commutator $[u,s]$ fixes every point of $A_u$.

\begin{cor}\label{cor:reducedcentralizer} let $G$ be a regular $\mathbb{Z}^n$-free group, and let $\tilde{A}$ be an abelian subgroup of $G$. Then $\tilde{A}$ is conjugate to an abelian subgroup $A$ of $G$ such that every element of $A$ is cyclically reduced.
\end{cor}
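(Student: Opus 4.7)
The plan is to reduce this to a single-element statement and then propagate to the whole subgroup using commutativity. The key observation is that in the setup of the paper, an element $g\neq 1$ is cyclically reduced precisely when $x\in A_g$ (as noted in the discussion after Lemma~\ref{lem:axesthroughx}). Lemma~\ref{lem:axesthroughx} hands us a conjugate of any \emph{single} nontrivial element with this property, and Lemma~\ref{lem:reducedcentralizer} says that this property is shared among all commuting elements. Combining these should immediately yield the corollary.

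Concretely, if $\tilde{A}=\{1\}$, take $A=\tilde{A}$. Otherwise, pick any nontrivial $u\in\tilde{A}$. By Lemma~\ref{lem:axesthroughx} (which uses regularity of $l$), there exists $v\in G$ such that $x\in A_{vuv^{-1}}$. Set $A=v\tilde{A}v^{-1}$; this is an abelian subgroup of $G$ conjugate to $\tilde{A}$ and containing the cyclically reduced element $f=vuv^{-1}$.

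To conclude, I would verify that every nontrivial $g\in A$ satisfies $x\in A_g$. Since $A$ is abelian, $g$ commutes with $f$, and $x\in A_f$ by construction. Lemma~\ref{lem:reducedcentralizer} then yields $x\in A_g$, which is the definition of $g$ being cyclically reduced.

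There is no real obstacle here; the corollary is essentially a formal consequence of the two preceding lemmas. The only subtlety worth flagging is that Lemma~\ref{lem:reducedcentralizer} is stated without any hypothesis distinguishing which of $u,s$ has greater height, so it applies symmetrically to all pairs of commuting nontrivial elements, which is precisely what is needed to propagate the ``axis through $x$'' property from the single element $f$ to all of $A$.
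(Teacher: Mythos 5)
Your argument is correct and is exactly the paper's proof: the paper's own justification of this corollary is the one-line remark that it follows from Lemmas~\ref{lem:axesthroughx} and~\ref{lem:reducedcentralizer}, and you have simply spelled out that combination (conjugate one nontrivial element so its axis passes through the base point, then use the commutativity lemma to propagate the property to the whole subgroup). Nothing further is needed.
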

\begin{proof} This follows from Lemmas~\ref{lem:axesthroughx} and ~\ref{lem:reducedcentralizer}.
\end{proof}
%
%

\begin{defn}\label{defn:coherentdirections} Let $G$ act on a $\Lambda$-tree $T$ with a base point $x$. Let $f$ and $g$ be hyperbolic elements with overlapping axes so that $A_g\cap A_f$ contains more than one point. We say that $f$ and $g$ have \emph{coherent directions}, or simply that $f$ and $g$ are \emph{coherent} if there is a point $p\in A_f\cap A_g$ so that $p\notin [gp,fp]$. If $A_g\cap A_f$ is a single point, we regard $g$ and $f$ as coherent.
\end{defn}

\begin{thm}\label{thm:rankabelian} Let $G$ be a regular $\mathbb{Z}^n$-free group, and let $A$ be a cyclically reduced abelian subgroup of $G$. Then $A$ has a generating set $\{t_1,t_2,\dots,t_k\}$ so that $ht(t_i)<ht(t_{i+1})$ for all $i=1,\dots, k-1$. In particular, every abelian subgroup of $G$ can be generated by at most $n$ elements.
\end{thm}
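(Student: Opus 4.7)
The plan is to show that $A$ embeds in $(\mathbb{Z}^n,\mathrm{lex})$ as a subgroup via signed translation length, and then invoke the elementary structure theorem for subgroups of $\mathbb{Z}^n$ with the lexicographic order.

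Since $G$ acts freely on its $\mathbb{Z}^n$-tree $T$, every nontrivial element of $A$ is hyperbolic. For commuting $a,b\in A\setminus\{1\}$ the axis $A_a$ is $b$-invariant (since $b\cdot A_a=A_{bab^{-1}}=A_a$), and $b$ acts on $A_a$ without fixed points because the $G$-action is free. Inversions are excluded by the definition of freeness, so $b$ restricts to a nontrivial translation of the $\mathbb{Z}^n$-line $A_a$; hence $A_b\supseteq A_a$, and by symmetry $A_a=A_b$. Thus all elements of $A$ share one common axis $L$. After first applying Corollary~\ref{cor:reducedcentralizer} (passing to the regular case via the length-preserving embedding of \cite{KharlampovichMyasnikov11} if necessary) and replacing $A$ by a conjugate, we may assume $x\in L$, so that $l(a)=l_T(a)$ and $ht(a)$ equals the height of the translation of $a$ along $L$. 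Choosing an orientation on $L$, each $a\in A$ acts by translation through some $\tau(a)\in\mathbb{Z}^n$; the map $\tau\colon A\to\mathbb{Z}^n$ is a homomorphism (as $A$ is abelian) and is injective (by freeness of the action).

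It then remains to verify the arithmetic statement: every subgroup $B\leq(\mathbb{Z}^n,\mathrm{lex})$ has a generating set of strictly increasing heights. For each $h$, the set $B_h:=\{b\in B:ht(b)\leq h\}$ is a subgroup, since heights in $\mathbb{Z}^n$ are subadditive and inversion-invariant. Projection to the $h$-th coordinate defines a homomorphism $\pi_h\colon B_h\to\mathbb{Z}$ whose kernel is exactly $B_{h-1}$, so each quotient $B_h/B_{h-1}$ embeds in $\mathbb{Z}$ and is therefore cyclic. Letting $h_1<\cdots<h_k$ enumerate the heights at which this quotient is nontrivial, we lift a generator from each to obtain elements $t_1,\dots,t_k\in B$ with $ht(t_i)=h_i$ generating $B$; since there are at most $n$ available heights, $k\leq n$, yielding the claimed bound.

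The principal obstacle is the shared-axis argument: commuting hyperbolic elements of a $\mathbb{Z}^n$-tree need not share an axis in the naive sense familiar from $\mathbb{R}$-trees, and one must carefully verify that freeness rules out inversions (so that $b|_{A_a}$ is a genuine translation) and that a nontrivial translation of a $\mathbb{Z}^n$-line has that entire line as its axis; both follow from the structure theory of isometries of $\Lambda$-trees. A secondary subtlety is that the theorem asks for heights defined via $l$, not via $l_T$, which is why the conjugation provided by Corollary~\ref{cor:reducedcentralizer} --- placing the common axis through the base point --- is essential to identifying $ht(a)$ with $ht(\tau(a))$.
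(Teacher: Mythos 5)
Your proof is correct, but it takes a genuinely different route from the paper. The paper works combinatorially with an arbitrary generating set $S$ of $A$: it first collapses all height-one generators into a single cyclic subgroup, then uses a B\'ezout/gcd trick to replace two generators of equal height by one generator of that height plus generators of strictly smaller height, and finally invokes a Noetherian-type termination argument (via the $R$-bounded sets of Definition~\ref{defn:Rbounded} and Lemma~\ref{lem:Rboundedabelian}) to show that only finitely many elements of $S$ need to be processed. You instead linearize the whole problem: all nontrivial elements of $A$ share a common axis $L$, the translation map $\tau\colon A\to\mathbb{Z}^n$ along $L$ is an injective homomorphism, and the desired generating set is read off from the height filtration $B_h=\{b:ht(b)\le h\}$ of the subgroup $B=\tau(A)\le\mathbb{Z}^n$, each quotient $B_h/B_{h-1}$ being cyclic. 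Your approach buys a cleaner argument that avoids the termination issue entirely and handles non--finitely-generated $A$ without any exhaustion of $S$; its cost is that it leans on the structure theory of isometries of $\Lambda$-trees, in particular the displacement formula $d(p,gp)=l_T(g)+2d(p,A_g)$, which is what you need both to rule out a disjoint bridge between $A_a$ and $A_b$ (forcing $A_a\cap A_b\neq\emptyset$ and hence $A_a=A_b$) and to see that a translation of the line $L$ has all of $L$ in its axis --- you correctly flag this, and it is standard (Alperin--Bass, Chiswell). Two shared caveats, not gaps specific to you: Corollary~\ref{cor:reducedcentralizer} requires regularity, which you address via the length-preserving embedding into a regular $\mathbb{Z}^n$-free group; and the initial replacement of $A$ by a conjugate is also made in the paper's own proof, even though heights are not conjugation-invariant, so the strictly-increasing-heights conclusion is really established for that conjugate (the bound of $n$ generators is of course unaffected). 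It is worth noting that the paper's Lemma~\ref{lem:Rboundedabelian} already quietly uses the key point of your argument --- that $l$ is injective on $A$ up to inversion --- so your proof can be seen as pushing that observation to its natural conclusion.
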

\begin{proof} Let $k=\max_{a\in A}\{ht(a)\}$, we call $k$ the height of $A$ and denote it by $k=ht(A)$. Note that $k=\max_{s\in S}\{ht(s)\}$, for any generating set $S$ for $A$.
 We argue by induction on $k$.

 Let $k=1$. All the elements of height 1 in $G$ form a $\mathbb{Z}$-free subgroup $H$. Hence, $A$ is an abelian subgroup of the free group $H$, and so $A$ is cyclic.

 Now, assume that every abelian subgroup $A_k$ of $G$ of height at most $k$ can be generated by at most $ht(A_k)$ elements, and let $A$ have height $k+1$. Let $S$ be a generating set for $A$, and let $s,t\in S$ have the maximum height: $ht(s)=ht(t)=k+1$. By replacing $s$ by its inverse if necessary, we can assume that $s$ and $t$ are coherent. We claim that $\langle s,t\rangle=\langle p,\tilde{s},\tilde{t}\ \rangle$ for some $p$,$\tilde{s}$ and $\tilde{t}$ with $ht(\tilde{s}),ht(\tilde{t})<ht(p)=k+1$. Indeed, let $m_s>0$ and $m_t>0$ be the largest non-zero components of $l(s)$ and $l(t)$, respectively. We set
$m=\sigma m_s+\tau m_t=gcd(m_s,m_t),\quad p=s^{\sigma }t^{\tau}$.
Then we obtain the claim for $$\tilde{s}=sp^{-\frac{m_s}{m}},\quad \tilde{t}=tp^{-\frac{m_t}{m}}.$$
If $m=m_s$ then $p=s$, and we set $\tilde{s}=1$, so that $\langle s,t\rangle=\langle p, \tilde{t}\ \rangle$. Since the $k+1$st component of $l(p)$ equals $m\geq 1$, $ht(p)=k+1$. In the generating set $S$ we replace $s$ and $t$ with $p,\tilde{s}$ and $\tilde{t}$. Clearly, the new set generates $G$ also. If there are more elements of height $k+1$ in $S$, we can proceed replacing them in a similar way until there is only one element $t_{k+1}$ of the maximal height in the obtained set $S'$. Let $S_k=S'\setminus\{t_{k+1}\}$. Denote $\langle S_k\rangle$ by $A_k$, then $A_k\subset A$ with $ht(A_k)\leq k$. By the induction hypothesis, $A_k$ is generated by $\{t_{j_1},t_{j_2},\dots,t_{j_l}\}$ with $l\leq k$ and $ht(t_{j_1})\leq ht(t_{j_2})\leq\dots\leq ht(t_{j_l})\leq k$. Then
$\{t_{j_1},t_{j_2},\dots,t_{j_l},t_{k+1}\}$ is a generating set for $A$ that satisfies the requirements.
\end{proof}

Below we use the following well-known fact about $\mathbb{Z}^n$-free groups.
\begin{lem} Let $G$ be a $\mathbb{Z}^n$-free group. Let $u$ be a nontrivial element of $G$. Then $u$ is not conjugate to $u^{-1}$ in $G$.
\end{lem}
\begin{proof} Suppose $gug^{-1}=u^{-1}$ for some $g\in G$. Take the inverses of both sides of the equality to get $gu^{-1}g^{-1}=u$. On the other hand, the first equality amounts to $u=g^{-1}u^{-1}g$. Therefore, $g^2$ and $u$ commute, so $A_g=A_{g^2}=A_u$ which means that $g$ and $u$ commute. But then $u=u^{-1}$, or $u^{2}=1$, a contradiction since $G$ is torsion-free.
\end{proof}
In a free group, if $sus^{-1}=v$, where $u$ and $v$ are  cyclically reduced and $s$ is not cyclically reduced, then some cyclic permutation of $u$ and some cyclic permutation of $v$ are conjugate by a cyclically reduced element. The following lemma should be viewed as a generalization of this fact.
\begin{lem} Let $G$ be a regular $\mathbb{Z}^n$-free group with a base point. Let $u$ and $v$ be cyclically reduced elements of $G$ with $l(u)=l(v)>0$, and suppose that $sus^{-1}=v$ for some $s\in G$ such that $s$ is not cyclically reduced. Then there are $w\in G$ and a cyclically reduced $z\in G$ so that at least one of the following holds:
\begin{enumerate}
  \item $u=u_1w^{-1}$, $v=wv_1$, and $zw^{-1}u_1z^{-1}=v_1w$, where $l(u)=l(u_1)+l(w^{-1})$ and $l(v)=l(v_1)+l(w)$.
  \item $u=wu_1$, $v=v_1w^{-1}$, and $zu_1wz^{-1}=w^{-1}v_1$, where $l(u)=l(u_1)+l(w)$ and $l(v)=l(v_1)+l(w^{-1})$.
\end{enumerate}
If $w\neq 1$ then exactly one of the two cases holds. In each case, $v_1\neq 1$ and $u_1\neq 1$.
\end{lem}
\begin{proof} If $s$ is not cyclically reduced then $[x,s^{-1}x]\cap[x,sx]=[x,wx]$ for some $w\in G$, as the length function is regular. We have that $z=w^{-1}sw$ for some cyclically reduced $z\in G$ (cf. the proof of Lemma~\ref{lem:axesthroughx}), so that $s=wzw^{-1}$.

If $s$ and $u$ commute then by lemma~\ref{lem:reducedcentralizer}, $s$ has to be cyclically reduced. In what follows, we assume that $s$ and $u$ do not commute. We consider the following two cases:

\textbf{Case 1}, $ht(s)>ht(u)$. Assume that $l(w)\geq l(u)$. By Lemma~\ref{lem:axisofstableletter} Case 2, $[x,v^px]\subset[x,sx]$ and $[x,u^{-p}x]\subset[x,s^{-1}x]$ either for all $p\geq 0$ or for all $p<0$. However, $[x,sx]\cap[x,s^{-1}x]=[x,wx]$, and since $l(w)\geq l(u)=l(v)$, we conclude that either $v.x=u^{-1}.x$, or $v^{-1}.x=u.x$; either equality implies $v=u^{-1}$. So, $u$ is conjugate to its inverse, a contradiction.

Now, let $l(w)<l(u)$. We have $[x,wx]\subseteq [x,v^{\mu}x]\cap [x,u^{-\mu}x]$ with $\mu\in\{-1,1\}$. If $\mu=1$ then $v=wv_1$ and $u^{-1}=wu_1^{-1}$, or $u=u_1w^{-1}$, with $l(u)=l(u_1)+l(w^{-1})$ and $l(v)=l(v_1)+l(w)$. Then the equality $sus^{-1}=v$ can be written as
      $
      (wzw^{-1})(u_1w^{-1})(wz^{-1}w^{-1})=wv_1,
      $
      and we have that $zw^{-1}u_1z^{-1}=v_1w$. If $\mu=-1$ then $v^{-1}=wv_1^{-1}$, or $v=v_1w^{-1}$, and $u=wu_1$, with $l(u)=l(u_1)+l(w)$ and $l(v)=l(v_1)+l(w^{-1})$. We have then
      $
      (wzw^{-1})(wu_1)(wz^{-1}w^{-1})=v_1w^{-1},
      $
      or $zu_1wz^{-1}=w^{-1}v_1$. Note that if $w\neq 1$, the cases $\mu=1$ and $\mu=-1$ are mutually exclusive because $u$ and $v$ are reduced, and so $[x,vx]\cap[x,v^{-1}x]=x$ cannot contain $[x,wx]$.

 \textbf{Case 2}, $ht(s)\leq ht(u)$. By Lemma~\ref{lem:axisofstableletter} Case 1, $[x,\hat{s}^{-1}x]\subset[x,ux]$ and $[x,\hat sx]\subset[x,v^{-1}x]$, where $\hat{s}=su^k$ for some $k$. Clearly, $\hat{s}u\hat{s}^{-1}=v$, so if $\hat{s}$ is cyclically reduced then we set $z=\hat{s}$ and $w=1$. Therefore, w.l.o.g. we assume that $[x,s^{-1}x]\subset[x,ux]$ and $[x,sx]\subset[x,v^{-1}x]$, and $s$ is not cyclically reduced. Necessarily, $l(w)<l(s)\leq l(u)$, and so $[x,wx]=[x,ux]\cap[x,v^{-1}x]$. Therefore, $u=wu_1$, and $v^{-1}=wv^{-1}_1$, or $v=v_1w^{-1}$, with $l(u)=l(u_1)+l(w)$ and $l(v)=l(v_1)+l(w^{-1})$, and $zu_1wz^{-1}=w^{-1}v_1$.

Note that $u\neq 1$ and $v\neq 1$ by assumption, so that if $w=1$ then necessarily $v_1\neq 1$ and $u_1\neq 1$. If $w\neq 1$ then the assertion follows because $l(u)=l(v)$ and $v\neq u^{-1}$.
\end{proof}

%

\section{Structure of regular $\mathbb{Z}^n$-free groups} \label{sec:structure}

\begin{lem} \label{lem:sumoflettersthroughbase}
Let $G$ be a finitely generated group acting freely on a $\mathbb{Z}^n$-tree with a base point $x$. Let $g_1,\dots,g_m$ be nontrivial cyclically reduced (not necessarily distinct) elements of $G$, and let $u=g_1\dots g_{m}$. Assume that any two consecutive factors $g_{i}$ and $g_{i+1}$ in the word $u$ are coherent.
Then the axis of $u$ contains $x$, and
$$l(u)=\sum_{i=1}^m l(g_i).$$
\end{lem}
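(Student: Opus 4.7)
The plan is to prove the length formula $l(u)=\sum_{i} l(g_i)$ by induction on $m$, via a local ``no-backtracking'' analysis at each junction point, and then to derive $x\in A_u$ as a consequence. The base case $m=1$ is immediate since $g_1$ is cyclically reduced, giving $x\in A_{g_1}$. For the inductive step let $w=g_1\cdots g_{m-1}$; by the inductive hypothesis $x\in A_w$ and $[x,wx]$ equals the geodesic concatenation $S_1\cup\cdots\cup S_{m-1}$, where $S_i=(g_1\cdots g_{i-1})\cdot[x,g_ix]$ has length $l(g_i)$. It then suffices to verify that $[x,wx]$ and $S_m=w\cdot[x,g_mx]$ meet only at their common endpoint $wx$. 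After translating by the isometry $w^{-1}$ this becomes
\[
[w^{-1}x,\,x]\cap[x,\,g_m x]=\{x\}.
\]
The initial sub-segment of $[w^{-1}x,x]$ issuing from $x$ is the $w^{-1}$-translate of the last piece $S_{m-1}$, which equals $[x,g_{m-1}^{-1}x]$. Hence any non-trivial overlap at $x$ lies inside $[x,g_{m-1}^{-1}x]\cap[x,g_m x]$, so the step reduces to the sub-claim that $[x,g_{m-1}^{-1}x]\cap[x,g_m x]=\{x\}$.

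The core geometric sub-claim is: for two cyclically reduced $g,f\in G$ satisfying the coherence hypothesis of the lemma, $[x,g^{-1}x]\cap[x,fx]=\{x\}$. I split by the size of $A_g\cap A_f$. If the intersection is a single point, it must be $x$ itself (since both axes contain $x$), and the distinct lines $A_g$ and $A_f$ branch at $x$; hence $[x,g^{-1}x]\subset A_g$ and $[x,fx]\subset A_f$ meet only at $x$. If $A_g\cap A_f$ is a non-trivial segment through $x$, then by hypothesis $g$ and $f$ are coherent: there is a point $p\in A_g\cap A_f$ with $p\notin[gp,fp]$, meaning $g$ and $f$ translate $p$ along the common segment in the same direction. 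In a $\mathbb{Z}^n$-tree, the direction of translation along a shared axis segment is a well-defined sign, so this consistency extends to every point of $A_g\cap A_f$, and in particular to $x$: both $gx$ and $fx$ lie on the same side of $x$ in the common segment, while $g^{-1}x$ lies on the opposite side from $fx$, giving the required intersection.

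Applying this sub-claim to the pair $(g_{m-1},g_m)$ completes the induction and yields $l(u)=\sum_i l(g_i)$. For the remaining assertion $x\in A_u$, I apply the same non-backtracking analysis to the doubled sequence $g_1,\ldots,g_m,g_1,\ldots,g_m$, verifying the sub-claim additionally at the ``seam'' $(g_m,g_1)$ (so one reads the coherence hypothesis cyclically on $u$). This yields $d(x,u^2 x)=2\sum_i l(g_i)=2l(u)$, matching the triangle-inequality bound $d(x,ux)+d(ux,u^2 x)=2l(u)$; the equality forces $x$ to be the midpoint of $[x,u^2 x]$, which is equivalent to $x\in A_u$. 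I expect the main obstacle to be the second case of the sub-claim, where one must use the $\mathbb{Z}^n$-tree axis geometry to propagate the coherence direction from an arbitrary point $p$ of $A_g\cap A_f$ to the base point $x$, and to keep track of overlaps when the intersection of axes is much longer than either $l(g)$ or $l(f)$.
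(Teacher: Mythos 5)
Your argument is essentially correct and is the same induction-plus-local-analysis as the paper's, with a different grouping: the paper absorbs the first two factors into $f_1=g_1g_2$, cites Chiswell's book for the $m=2$ case ($l_T(fg)=l_T(f)+l_T(g)$ and $A_f\cap A_g\subseteq A_{fg}$ when the axes meet in a point or coherently), and asserts that the hypotheses pass to $f_1g_3\cdots g_m$; you instead peel off the last factor and prove the junction condition $[x,g_{m-1}^{-1}x]\cap[x,g_mx]=\{x\}$ by hand. Your reduction of the inductive step to that junction condition is valid (in a $\Lambda$-tree the intersection of two segments sharing the endpoint $x$ is an initial segment at $x$, so it suffices to separate initial subsegments), and your two-case analysis is right; just phrase the coherent case in terms of germs at $x$ rather than ``$gx$ and $fx$ lie in the common segment'' (the overlap may be shorter than $l(g)$ or $l(f)$), and treat separately the case where $x$ is an endpoint of $A_g\cap A_f$. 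One structural slip: $x\in A_w$ should not appear in your inductive hypothesis --- it is not used in the step (only the geodesic decomposition of $[x,wx]$ is), and it is not available, for the reason below.

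The seam issue you flag is not a technicality you may wave through: the conclusion $x\in A_u$ genuinely requires coherence of the cyclic pair $(g_m,g_1)$, and the lemma is false if ``consecutive'' is read non-cyclically once $m\geq 3$. Take $G=F(a,b)$ with base point $1$ and $g_1=a$, $g_2=b$, $g_3=a^{-1}$: each factor is cyclically reduced, each consecutive pair of axes meets in the single point $1$, so the hypothesis holds vacuously; $l(aba^{-1})=3=\sum_i l(g_i)$, but $A_{aba^{-1}}=aA_b$ does not contain $1$. (The same example shows the delicate point in the paper's own induction: $A_{ab}\cap A_{a^{-1}}=[1,a]$ and $ab$, $a^{-1}$ are not coherent, so the hypothesis does not automatically propagate to the pair $(g_1g_2,g_3)$.) For $m=2$ the seam condition is automatic because coherence of $(g,f)$ is symmetric, which is why Chiswell's two-element statement needs no extra hypothesis. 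So you should state explicitly that the axis conclusion is proved under the cyclically-read hypothesis, while your proof of $l(u)=\sum_i l(g_i)$ stands under the hypothesis as written, since it only ever uses the junctions $(g_i,g_{i+1})$ with $i\leq m-1$. Finally, correct the last sentence: the equality $d(x,u^2x)=2l(u)$ says that $ux$ is the midpoint of $[x,u^2x]$, i.e. $ux\in[x,u^2x]$, which after translating by $u^{-1}$ is the criterion $x\in[u^{-1}x,ux]$, equivalent to $x\in A_u$.
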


\begin{proof} We prove the assertion by induction on the length $m$ of $u$. Note that every $g\neq 1$ in $G$ is hyperbolic. For $m=1$ the claim is trivial, and for  $m=2$ it is proved in~\cite{ChiswellBook}. The proof shows that under our assumptions, if $f=g_i$ and $g=g_{i+1}$, then $l_T(fg)=l_T(f)+l_T(g)$ and the axis of $fg$ contains the intersection $A_f\cap A_g\subseteq A_{fg}$. Moreover, both intersections $A_f\cap A_{fg}$ and $A_g\cap A_{fg}$ contain more than one point, and $f$ and $fg$, as well as $g$ and $fg$ are coherent. In particular, if we denote by $$u_1=f_1g_3\dots,g_{m},$$ where $f_1=g_1g_2$, both assumptions will hold for $u_1$ as well. The claim now follows by induction.
\end{proof}

\begin{defn}\label{defn:cyclreducedabeliansbgp} An abelian subgroup of a regular $\mathbb{Z}^n$-free group is called \emph{cyclically reduced} if every element of $A$ is cyclically reduced. Note that in~\cite{KharlampovichMyasnikov10} such a subgroup is termed a \emph{cyclically reduced centralizer}.
\end{defn}

\begin{thm} \cite[Theorem 7]{KharlampovichMyasnikov10}
\label{thm:main2}
Let $G$ be a finitely generated group with a regular free Lyndon
length function in $\mathbb{Z}^n$. Then $G$ can be represented as
a union of a finite series of groups
$$G_1 < G_2 < \cdots < G_r = G,$$
where $G_1$ is a free group of finite rank, and
$$G_{i+1} = \langle G_i, s_{i,1},\ \ldots,\ s_{i,k_i} \mid s_{i,j}^{-1}\ C_{i,j}\ s_{i,j} = \phi_{i,j}(C_{i,j}) \rangle,$$
where for each $j \in [1,k_i],\ C_{i,j}$ and $\phi_{i,j}(C_{i,j})$ are cyclically reduced abelian subgroups of $G_i$,
$\phi_{i,j}$ is an isomorphism, and the following conditions are satisfied:
\begin{enumerate}
\item[(1)] $C_{i,j} = \langle c^{(i,j)}_1, \ldots, c^{(i,j)}_{m_{i,j}} \rangle,\
\phi_{i,j}(C_{i,j}) = \langle d^{(i,j)}_1, \ldots, d^{(i,j)}_{m_{i,j}} \rangle$,
where $\phi_{i,j}(c^{(i,j)}_k) = d^{(i,j)}_k,\ k \in [1,m_{i,j}]$ and
$$ht(c^{(i,j)}_k) = ht(d^{(i,j)}_k) < ht(d^{(i,j)}_{k+1}) = ht(c^{(i,j)}_{k+1}),\
k \in [1,m_{i,j}-1],$$
$ht(s_{i,j})>ht(c^{(i,j)}_k)$,
\item[(2)]  $\ell(\phi_{i,j}(w)) =\ell (w)$ for any $w \in C_{i,j}$,
\item[(3)] $w$ is not conjugate to $\phi_{i,j}(w)^{-1}$ in $G_i$ for any $w \in C_{i,j}$,
\item[(4)] if $A, B \in \{C_{i,1}, \phi_{i,1}(C_{i,1}), \ldots, C_{i,k_i},
\phi_{i,k_i}(C_{i,k_i})\}$ then either $A = B$, or $A$ and $B$ are not conjugate
in $G_i$,
\item[(5)] $C_{i,j}$ can appear in the list
$$\{ C_{i,k}, \phi_{i,k}(C_{i,k}) \mid k \neq j \}$$
not more than twice.
\item[(6)] $ht(s_{i,j})=ht(s_{i,k})$, for all $1\leq k,j\leq k_i$.
\end{enumerate}
\end{thm}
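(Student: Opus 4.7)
The plan is to induct on $n$ using the height stratification of $G$. The base case $n=1$ is immediate: a regular $\mathbb{Z}$-free group acts freely on a simplicial tree, and by Bass--Serre theory is free of finite rank; this plays the role of $G_1$.

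For the inductive step, I set $H = \{g \in G \mid ht(g) < n\}$ and first show that $H$ is a subgroup carrying a regular $\mathbb{Z}^{n-1}$-free length function obtained by discarding the top coordinate. Closure under multiplication follows from $l(gf) \leq l(g) + l(f)$ together with the observation that the top coordinate of $l(gf)$ can only be inherited from $l(g)$ or $l(f)$ via cancellation controlled by (L3) and (L6). Regularity of the restricted length function reduces to showing that the common beginning $u$ produced by (L6) applied to $g,f\in H$ itself has height $<n$, which follows from the length equation in (L6). By induction, $H$ admits the desired tower $G_1 < G_2 < \cdots < G_{r-1} = H$.

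Next I produce the generators of height exactly $n$. From a finite generating set for $G$, I use Lemma~\ref{lem:axesthroughx} and Corollary~\ref{cor:reducedcentralizer} to conjugate so that each height-$n$ generator $s_{r-1,j}$ is cyclically reduced. Lemma~\ref{lem:axisofstableletter}(\ref{e:longs}) is the crucial technical input: whenever $u \in H$ satisfies $s_{r-1,j}^{-1} u s_{r-1,j} \in H$, the axis $A_u$ must lie inside $[x, s_{r-1,j} x]$, and Lemma~\ref{lem:reducedcentralizer} then forces $u$ to centralize fixed elements that appear along $A_{s_{r-1,j}}$ at successively lower heights. Iterating produces an abelian subgroup $C_{r-1,j} \leq H$, and Theorem~\ref{thm:rankabelian} exhibits generators $c^{(r-1,j)}_1, \ldots, c^{(r-1,j)}_{m_{r-1,j}}$ of strictly increasing heights, yielding condition~(1). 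The map $\phi_{r-1,j}\colon x\mapsto s_{r-1,j}^{-1} x s_{r-1,j}$ is the required isomorphism.

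The remaining conditions reduce to bookkeeping: (2) follows from conjugation-invariance of translation length combined with the fact that elements of $C_{r-1,j}$ are cyclically reduced; (3) fails only if $w$ is conjugate in $G_i$ to $\phi_{r-1,j}(w)^{-1}$, which would make $s_{r-1,j}$ an inversion of the axis of $w$, contradicting freeness of the action; (4)--(5) are obtained by consolidating stable letters whose edge subgroups coincide up to conjugacy; (6) by organising a single inductive step to absorb all height-$n$ letters simultaneously. The principal obstacle, and the part that in~\cite{KharlampovichMyasnikov10} requires the full infinite-word machinery, is to show that \emph{no} unintended relations remain --- that the constructed presentation genuinely is an iterated HNN-extension rather than a quotient of one. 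This demands a Britton-style normal form: every word in the $s_{r-1,j}$ and $H$ equal to $1$ in $G$ must be Britton-reducible through the declared edge subgroups, and the uniqueness needed for this relies on the existence and uniqueness of reduced $\mathbb{Z}^n$-indexed infinite-word representatives established in~\cite{KharlampovichMyasnikov10}.
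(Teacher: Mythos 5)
This statement is not proved in the paper at all: it is imported verbatim as Theorem~7 of \cite{KharlampovichMyasnikov10}, so there is no in-paper argument to compare yours against. Judged on its own terms, your sketch identifies a plausible skeleton (induction on height, HNN extensions over centralizers at the top height) but leaves the genuinely hard content unproved, and where it matters most it defers to the very reference the theorem is quoted from. The decisive gap is finiteness. Your induction can only be applied to $H=\{g\in G\mid ht(g)<n\}$ if $H$ is \emph{finitely generated} and carries a regular free $\mathbb{Z}^{n-1}$-length function; you argue closure under multiplication and regularity, but say nothing about finite generation, and you equally need that $G$ is generated over $H$ by \emph{finitely many} height-$n$ elements whose associated subgroups fall into finitely many conjugacy classes. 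These finiteness statements are precisely the content of the theorem and are where the Nielsen-type analysis of reduced generating sets and infinite words in \cite{KharlampovichMyasnikov10} is actually consumed; they do not follow from the lemmas available in this paper.

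Two further points. First, the identification of the associated subgroup $\{u\in H\mid s^{-1}us\in H\}$ with an abelian (cyclically reduced) centralizer is asserted via a loose appeal to Lemmas~\ref{lem:axisofstableletter} and~\ref{lem:reducedcentralizer}; those lemmas give segment inclusions for a single conjugating or commuting pair and do not by themselves show that this set is a subgroup, that it is abelian, or that it is all of a centralizer, which is needed for conditions (4) and (5) (these are not bookkeeping: they encode conjugacy separation of the associated subgroups and a bound on how often each may occur). Second, you concede that ruling out unintended relations --- i.e., establishing Britton normal forms for the declared presentation --- ``relies on'' the infinite-word representatives constructed in \cite{KharlampovichMyasnikov10}. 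Since that machinery \emph{is} the proof of Theorem~7, the proposal is at its core a citation of the result rather than a reconstruction of its proof. As written, the argument would be acceptable only as a heuristic outline accompanying the citation, not as an independent proof.
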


\begin{defn}\label{defn:compatibledir}
Let $C_{i,k}$ and $\phi_{i,k}(C_{i,k})$ be as in the statement of Theorem~\ref{thm:main2}, and let $S$ be a subset of elements of $C_{i,k}$. We say that the elements of $S$ have \emph{directions compatible with} $\phi_{i,k}$ if $\forall g,f\in S$, $g$ and $f$ are coherent if and only if $\phi_{i,k}(g)$ and $\phi_{i,k}(f)$ are coherent.
\end{defn}
\begin{cor}\label{cor:coherenceinabeliansbgp} Let $C_{i,j}$ be as in the statement of Theorem~\ref{thm:main2}. Generators for $C_{i,j}$ can be chosen so that they are all coherent in the meaning of Definition~\ref{defn:coherentdirections} and the directions of the generators are compatible with $\phi_{i,j}$.
\end{cor}
\begin{proof} In $C_{i,j} = \langle c^{(i,j)}_1, \ldots, c^{(i,j)}_{m_{i,j}} \rangle$ choose all $c^{(i,j)}_k$ be coherent with $c^{(i,j)}_1$ by replacing $c^{(i,j)}_k$ with its inverse if necessary. In $\phi_{i,j}(C_{i,j})= \langle d^{(i,j)}_1, \ldots, d^{(i,j)}_{m_{i,j}} \rangle$ choose all the generators be coherent with $d^{(i,j)}_1=\phi_{i,j}(c^{(i,j)}_1)$. Then $d^{(i,j)}_k=\phi_{i,j}(c^{(i,j)}_k)$, by the assertions (1) and (2) of Theorem~\ref{thm:main2}. Indeed, if $c$ and $a$ were two coherent generators of $C_{i,j}$, while $\phi_{i,j}(c),\phi_{i,j}(a)\in\phi_{i,j}(C_{i,j})$ were not coherent, then $l(ac)>l(\phi_{i,j}(ac))$ would contradict~(2).
\end{proof}

Observe, that if $l:G \to \Lambda$ is a Lyndon length function with values
in some ordered abelian group $\Lambda$ and $\mu:\Lambda \to \Lambda^\prime$
is a homomorphism of ordered abelian groups then the composition
$l^\prime = \mu \circ l$ gives a Lyndon length function
$l^\prime : G \to \Lambda ^\prime$. In particular, since ${\mathbb Z}^{n-1}$ is a
convex subgroup of ${\mathbb Z}^n$ then the canonical projection $\pi_n:{\mathbb Z}^n \to {\mathbb Z}$
such that  $\pi_n(x_1,x_2,\ldots,x_n) = x_n$ is an ordered homomorphism, so
the composition $\pi_n \circ l$ gives a Lyndon length function
$l^\prime : G \to {\mathbb Z}$ such that $l^\prime(g) = \pi_n(l(g))$. Notice also that
if $u = g \circ h$ then $l^\prime(u) = l^\prime(g) + l^\prime(h)$ for any
$g, h, u \in G$.

If $Y$ is a finite set of elements in  the group $G$ acting freely and regularly on ${\mathbb Z}^n$-tree, we denote by $Y_+$ the set of elements with non-zero length $l^\prime$ and by $Y_0$ the rest of elements.

\begin{defn} \textbf{(Reduced generating set)}\label{defn:reducedset}\cite[Definition 2]{KharlampovichMyasnikov10}.
A finite set $Y$ of elements of the group $G$ acting freely and regularly on ${\mathbb Z}^n$-tree is called \emph{reduced} if it satisfies the following conditions:
\begin{itemize}
\item[(a)] all elements of the subset $Y_+$  are cyclically reduced;
\item[(b)] if $f,g \in Y_+^{\pm 1},\ f \neq g$ then $l^\prime(com(f,hg)) = 0$
for any $h \in \langle Y_0 \rangle$;
\item[(c)] if $f \in Y_+^{\pm 1}$ and $l^\prime(com(f,h f)) > 0$ for some
$h \in \langle Y_0 \rangle$ then $l^\prime(com(f,hf)) = l^\prime(f)$.
\item[(d)] There are finitely many elements $\{h_1,h_2,\dots,h_m\}$ so that $l^\prime(h_i)=0$ for all $i=1,2,\dots,m$,
$T=Y\cup\{h_1,h_2,\dots,h_m\}$ satisfies the conditions (a),(b) and (c), and the following condition holds:
if $f \in Y_+^{\pm 1}$ and $l^\prime(com(f,hf)) > 0$ for some
$h \in \langle T_0 \rangle$ then $l^\prime(com(f,hf)) = l^\prime(f)$ and
$f^{-1}hf \in \langle T_0 \rangle$.
\end{itemize}
\end{defn}

\begin{rem} According to \cite[Lemmas 10,11,12 and Proposition 2]{KharlampovichMyasnikov10}, an arbitrary generating set $S$ of a regular ${\mathbb Z}^n$-free group $G$ can be transformed into a reduced generating set $W$ for $G$. Then, applying \cite[Theorem 6]{KharlampovichMyasnikov10} to $W$, by induction on the number of the HNN-extensions in Theorem~\ref{thm:main2}, one can obtain a reduced generating set $Z=X\cup\{t_1,\dots,t_l\}$ for $G$, where where $X$ is a free basis for the free group $G_1$ and $t_1,\dots,t_l$ are stable letters.
\end{rem}

\begin{lem} \label{lem:reductioninnormalforms} Let $G$ be a $\mathbb{Z}^n$-free complete group obtained by a finite number of HNN extensions from a finitely generated free group $F(X)$, where $X$ is a free basis. Fix a reduced  generating set $X\cup\{t_1,\dots,t_l\}$ for $G$; $t_1,\dots,t_l$ are stable letters. Let $w_1,\dots,w_r$ be a finite number of words in normal forms. If for some of these words $w=u_1t_{i_1}u_2t_{i_2}\dots u_m t_{i_m}$ $(u_j\in F(X))$ its length $l(w)$ is strictly smaller than the sum of the lengths of the letters then we can replace $t_1,\dots,t_l$ by new generators $\bar{t}_1,\dots,\bar{t}_l$ so that $w=\bar{u}_1\bar{t}_{i_1}\bar{u}_2\bar{t}_{i_2}\dots \bar{u}_m \bar{t}_{i_m}$ is a normal form and
$$l(w)=\sum_{j=1}^m (l(\bar{u}_j)+l(\bar{t}_{i_j})).$$
New generators can be chosen so that the equality holds for all the words $w_1,\dots,w_r$, simultaneously. Moreover, the new generating set is also reduced.
\end{lem}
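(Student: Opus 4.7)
The plan is to locate each cancellation in $w$ and absorb it into redefined stable letters. Write $\Delta := \sum_{j=1}^{m}(l(u_j)+l(t_{i_j})) - l(w) > 0$. Reading contributions along the path $x,\, u_1 x,\, u_1 t_{i_1} x,\, \dots,\, wx$ in the $\mathbb{Z}^n$-tree, $\Delta/2$ equals the total common-initial overlap accumulated at the $2m-1$ inner corners. By axiom (L6), each strictly positive overlap at a corner is witnessed by a common left factor $v \in G$ of the two incident letters with additive lengths. Since at every corner one of the two incident letters lies in $F(X)$, and any point on a geodesic from $x$ to an $F(X)$-translate of $x$ is of the form $f \cdot x$ for a unique $f \in F(X)$ (by freeness of the $F(X)$-action on the convex subtree $F(X)\cdot x$), each such $v$ lies in $F(X)$.

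For each stable letter $t_i$ occurring in $w$, let $\gamma_i \in F(X)$ be the maximal initial $F(X)$-prefix of $[x,\, t_i x]$---the unique longest $\gamma \in F(X)$ with $[x,\, \gamma x] \subset [x,\, t_i x]$, which exists because the intersection $[x,\, t_i x] \cap F(X)\cdot x$ is a closed sub-segment---and let $\delta_i \in F(X)$ be defined analogously from the $t_i^{-1}$-side. Then $t_i = \gamma_i\, t_i^{\circ}\, \delta_i^{-1}$ with $t_i^{\circ} := \gamma_i^{-1}\, t_i\, \delta_i$ carrying no $F(X)$-prefix on either end. Set $\bar t_i := t_i^{\circ}$ and $\bar u_j := \delta_{i_{j-1}}^{-1}\, u_j\, \gamma_{i_j} \in F(X)$ with $\delta_{i_0} := 1$; the residual trailing $\delta_{i_m}^{-1}$ at the right endpoint is folded into $\bar u_m$ (equivalently, setting $\delta_{i_m} := 1$ at the rightmost occurrence when $t_{i_m}$ appears only there). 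A direct expansion yields $w = \bar u_1 \bar t_{i_1} \cdots \bar u_m \bar t_{i_m}$, and the absence of $F(X)$-prefix in any $\bar t_i$ on either side forces zero overlap at every inner corner of the new factorization, so that $l(w) = \sum_{j=1}^{m}(l(\bar u_j) + l(\bar t_{i_j}))$.

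For the moreover clause, conditions (a)--(d) of Definition~\ref{defn:reducedset} for $X \cup \{\bar t_1, \dots, \bar t_l\}$ can be verified in order: (a) is the maximality of $\gamma_i, \delta_i$, which places the axis of $\bar t_i$ through $x$; (b), (c) follow from the absence of $F(X)$-overlap with $\bar t_i$ combined with regularity (L6); (d) uses the completeness of $G$ to enlarge the length-zero auxiliary set. The hardest step will be the consistency across multiple occurrences of the same $t_i$ together with the endpoint artifact: one must confirm that the intrinsic absorption $\gamma_i$ (resp.\ $\delta_i$) equals exactly the corner overlap at every occurrence of $t_i$ in $w$, not merely an upper bound, and that the trailing $\delta_{i_m}^{-1}$ can be harmlessly absorbed. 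I expect this to follow from Lemma~\ref{lem:axisofstableletter}, the inequality $ht(t_i) > ht(C_{i,j})$ supplied by Theorem~\ref{thm:main2}, and the HNN-normal-form hypothesis on $w$---any shortfall would force an adjacent $u_j$ to terminate in a $C_{i,j}$-word and trigger a pinch, contradicting normal form.
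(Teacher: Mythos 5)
Your plan reduces the length deficit to a sum of independent ``corner overlaps,'' each realized by an element of $F(X)$, and then strips the maximal $F(X)$-prefix $\gamma_i$ (resp.\ suffix $\delta_i$) from each stable letter. This misses the main phenomenon the lemma has to handle, and the place where it fails is exactly the step you defer to the end. Let $s$ be a stable letter with $s^{-1}Cs=\phi(C)$ and $ht(s)>ht(C)$, and let $d\in C$. A subword of the form $d^{-\alpha}\,s\,\phi(d)^{\beta}$ is in normal form --- there is no pinch, because a pinch requires the patterns $s^{-1}cs$ or $s\phi(c)s^{-1}$ with those specific signs --- and yet it equals $d^{\beta-\alpha}s$, so $l$ drops by roughly $(\alpha+\beta)\,l(d)$. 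By Theorem~\ref{thm:singleHNN}(4) the generator $d$ of largest height in $C$ is itself a lower-level stable letter $t_k$, not an element of $F(X)$; so the cancelled segment of $[x,sx]$ is $[x,t_k^{\alpha}x]$, which is not of the form $[x,\gamma x]$ with $\gamma\in F(X)$, and its size depends on the exponent $\alpha$, i.e.\ on the particular occurrence. Your $\gamma_i,\delta_i\in F(X)$ cannot absorb this, and your fallback argument for the ``hardest step'' --- that any shortfall would force a pinch, contradicting normal form --- is precisely what is false here. Relatedly, your identity ``$\Delta/2$ equals the total overlap accumulated at the $2m-1$ inner corners'' presupposes that backtracking does not cascade past a letter; in the non-Archimedean setting an infinitesimal $u_{j+1}$ (or a trivial one) is swallowed whole and the next segment backtracks into $[x,t_{i_j}x]$, so the deficit is not a sum of pairwise $F(X)$-overlaps.

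The paper's proof is built around this obstruction: for each stable letter $s$ of maximal height it locates all subwords $d^{\alpha}s f^{\beta}$ with $d\in C$, $f\in\phi(C)$ exhibiting cancellation, uses Lemma~\ref{lem:axisofstableletter} to bound $[x,d^{\alpha}x]\subseteq[x,t_k^{\gamma}x]$ and $[x,f^{\beta}x]\subseteq[x,\phi(t_k)^{\delta}x]$ by powers of the top generator $t_k$ of $C$, takes the maxima $K,M$ of these exponents over all occurrences, and replaces $s$ by $\bar s$ with $s=t_k^{K}\bar s\,\phi(t_k)^{M}$ --- peeling off associated-subgroup elements, not $F(X)$-elements. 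It then descends through the heights, treating the letters of the abelian subgroups themselves ($t_k\mapsto t_{k-1}^{K}\bar t_k t_{k-1}^{M}$, $t_1\mapsto v^{K}\bar t_1 v^{M}$), and iterates in rounds, with termination because each new round only disturbs infinitesimal factors of strictly larger height. None of this recursive, height-stratified absorption is present in your argument, so the proposal as written does not prove the lemma. (Your observation that an overlap against a genuine $F(X)$-neighbour is itself an $F(X)$-prefix is fine as far as it goes, and corresponds to the paper's $K=0,M=0$ fallback where it seeks subwords $hs$ with $h\notin C$; but that is the easy part.)
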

\begin{proof} W.l.o.g., we can assume that $l(w_j)$ is strictly smaller than the sum of the lengths of the letters in its normal form, for all $j$. Let $S=\{s_1,\dots,s_m\}$ be the set of all the stable letters of maximal height $N\leq n$ that occur in the words $w_j$, $j=1,\dots,r$; we order the elements of $S$ arbitrarily. Let $s=s_1$ conjugate $C=\langle v,t_1,\dots,t_k\rangle$ to $\phi(C)$. We write this as $s^{-1}Cs=\phi(C)$, or $s\phi(C)s^{-1}=C$. Then by the choice of $S$ and by Theorem~\ref{thm:main2},
$$ht(s)>ht(t_k)>\dots>ht(t_1)>ht(v).$$

We fix $j$ and in the words $w_j$ and $w_j^{-1}$ find all the occurrences of subwords of the form $d^\alpha s^{-1} f^\beta$ with $d\in C$, $f\in\phi(C)$, $\alpha\neq 0$ and $\beta\neq 0$, so that $l(d^\alpha s^{-1})<l(d^\alpha) + l(s)$ and $l(s^{-1} f^\beta) < l(s) + l(f^\beta)$.
Note that for these inequalities to hold, $d^\alpha$ and $s^{-1}$, as well as $s^{-1}$ and $f^\beta$ should not be coherent. If $s^{-1}$ and $t_k^{-1}$ are coherent, this implies that $d^\alpha$ and $t_k^{-1}$, as well as $t_k^{-1}$ and $f^\beta$ should not be coherent, either. If there is no such $\alpha$ (or no such $\beta$) then we set $\alpha=0$ (or $\beta=0$).

Recall that by Lemma~\ref{lem:axisofstableletter}, $[x,d^px]\subset [x,sx]$ and $[x,f^{-p}x]\subset [x,s^{-1}x]$ either for every  positive, or for every negative integer $p$, and that $[x,t_k^qx]\subset [x,sx]$ and $[x,\phi(t_k)^{-q}x]\subset [x,s^{-1}x]$ either for every  positive, or for every negative integer $q$. Since $t_k$ is an element of the largest height in $C$, there exist $\gamma,\delta$ so that
\begin{equation}\label{e:subsegment}
[x,d^{\alpha} x]\subseteq [x,t_k^\gamma x],\quad [x,f^{\beta} x]\subseteq [x,(\phi(t_k))^\delta x].
\end{equation}
For all the occurrences of subwords $d^\alpha s^{-1} f^\beta$ in $w_j^{\pm 1}$, we choose $\gamma_j$ and $\delta_j$ minimal in the absolute value with respect to the inclusions~(\ref{e:subsegment}).

If all $\alpha=0$ (or all $\beta=0$) then we set $\gamma_j=0$ (or $\delta_j=0$). Let $$K=\max_j\{|\gamma_j|\}, \quad M=\max_j\{|\delta_j|\}.$$
If $K=0$ then we seek a subword $hs$ with $h\notin C$ so that $l(hs)<l(h)+l(s)$. If there is such a subword in some $w_j^{\pm 1}$ then we change the value of $K$ to $K=1$. Similarly, if $M=0$ we seek subwords $sh$ with $h\notin C$ so that $l(sh)<l(h)+l(s)$ and set $M=1$ if we find one. If at least one of $K,$ $M$ is not 0, in the generating set we replace $s$ by the new letter $\bar{s}$, where $s=t_k^K\bar{s}\phi(t_k)^M$. Since in this latter product both pairs of adjacent letters translate in the same direction and $l(t_k)=l(\phi(t_k))$, it follows that
$$l(s)=l(t_k^K)+ l(\bar{s}) + l(\phi(t_k)^M)\ \Rightarrow\ l(\bar{s})= l(s)-(K+M)l(t_k).$$
We express $s$ in terms of $\bar{s}$ in every word $w_j^{\pm 1}$ and freely reduce the words whenever possible. Note that since the generating set is reduced, the free reductions only affect infinitesimal elements and never affect $\bar{s}$; in particular, the words all remain in normal forms. We also note that $ht(t_k)\geq ht(d),ht(f)$, and whenever $ht(t_k)= ht(d)$ (or $ht(t_k)=ht(f)$), both $t_k^K$ and $d^\alpha$ (or both $\phi(t_k)^M$ and $f^\beta$) cancel in the free reduction.

We proceed with $s=s_2$, then $s=s_3,\dots,s_m$, in the same way as above. Now, we consider all the stable letters of height $N-1$ and apply the same procedure (to the modified words $w_j$) to replace them by new generators, whenever needed. Note that if a stable letter $s$ in question belongs to an abelian subgroup then we can still apply the same procedure, the only difference is that $\phi(C)=C$ and $\phi$ is the trivial automorphism. The letters $t_k$ in the abelian subgroups are replaced by $t_{k-1}^K\bar{t}_k t_{k-1}^M$ and $t_1$ is replaced by $v^K\bar{t}_1v^M$, whenever applicable.

Having done with all the letters of height 2, we continue with $s=\bar{s}_1$, $s=\bar{s}_2$, etc., and keep going until free cancellations are no longer possible. The process will be finite because in every new round, when we come back to $\bar{s}_i$, the cancellations only affect infinitesimal factors of the height, which is higher than before. That is, if $l(d s_i)<l(d) + l(s_i)$ first, and then we have $l(\bar{d} \bar{s}_i)<l(\bar{d}) + l(\bar{s}_i)$, it follows that $ht(d)<ht(\bar{d})$.

All the words remain in normal forms. The length of the product of two elements is smaller than the sum of the lengths of the factors only in the case when their axes overlap and the elements translate in the opposite directions on the common segments. In the process above we remove all such occasions while keeping track of the changes of the lengths of the letters involved. The equality of the lengths from the statement of the lemma follows. The generating set remains reduced because every change of a stable letter only affects an infinitesimal part of it.
\end{proof}

The following statement is used in section ~\ref{sec:gluing} below.
\begin{thm} \textbf{(A single HNN-extension)}
\label{thm:singleHNN}
Let $G$ be a finitely generated group with a regular free Lyndon
length function in $\mathbb{Z}^n$. Then $G$ can be represented as
a union of a finite series of groups
$$G_1 < G_2 < \cdots < G_r = G,$$
where $G_1$ is a free group of finite rank, and $\forall i=1,\dots,r-1$
$$G_{i+1} = \langle G_i, s_{i} \mid s_{i}^{-1}\ C_{i}\ s_{i} = \phi_{i}(C_{i}) \rangle$$
with $C_{i}$ and $\phi_{i}(C_{i})$ maximal abelian subgroups of $G_i$,
$\phi_{i}$ an isomorphism, and so that the following conditions are satisfied:
\begin{enumerate}
\item[(1)] $\ell(\phi_{i}(w)) = \ell(w)$ $\forall w \in C_{i}$,

\item[(2)] $\phi_{i}(w)$ and $w^{-1}$ are not conjugate in $G_i$, for any $w \in C_{i}$,

\item[(3)] $C_{i} = \langle c^{(i)}_1, \ldots, c^{(i)}_{m_{i}} \rangle,$ with
$$ht(c^{(i)}_k) <  ht(c^{(i)}_{k+1}),\ 1\leq k\leq m_i-1, \quad
ht(\phi_{i}(c^{(i)}_j)) =  ht(c^{(i)}_j),\ 1\leq j\leq m_i;$$

 \item[(4)] Generators of each $C_i$ can be chosen as follows: $c_1^{(i)}$ is a cyclically reduced element of $G_{j_1-1}$, and
 $c_2^{(i)}=s_{j_1},\ldots ,c_m^{(i)}=s_{j_{(m-1)}},$ where \newline $2\leq j_1,\ldots, j_{(m-1)}<i$.

\item[(5)]  Note that $C_{1}, \dots,  C_i, \phi_{1}(C_{1}), \dots, \phi_{i}(C_{i})$ are subgroups of $G_i$, $\forall i$. If $A, B \in \{C_{j}, \phi_{j}(C_{j}), \mid\ 1\leq j\leq i\}$ then either $A = B$, or $A$ and $B$ are not conjugate in $G_j$;


\item[(6)]\label{e:sHNN6} If $X$ is a free basis of $G_1$ then $X\cup\{s_1,\dots,s_r\}$ is a reduced generating set for $G$. By possibly replacing $s_i$,
 we obtain a reduced generating set $Y=X\cup\{y_1,\dots,y_r\}$ for $G$, so that the length in ${\mathbb Z}^n$ of every generator of each centralizer $C_{i}$ is the sum of the lengths of elements $g^{(i,j)}_l\in Y^{\pm 1}$, where
\[
c_j^{(i)}=g^{(i,j)}_1\dots g^{(i,j)}_{m_{ij}}, \quad l(c_j^{(i)})=l(g^{(i,j)}_1)+\dots+ l(g^{(i,j)}_{m_{ij}}),\ 1\leq l\leq m_{ij},\ \forall i,j.
\]
\end{enumerate}
\end{thm}

\begin{proof} Statements (1),(2),(3) and (5) follow from theorem~\ref{thm:main2}.

To prove (4) and (6), we start with a reduced generating set $Z=X\cup\{t_1,\dots,t_l\}$ for $G$, where $X$ is a free basis of the free group $G_1$. We assume that the generating set of $G$ satisfies the conclusions of Lemma~\ref{lem:reductioninnormalforms}.

Proof of (4).  We distinguish the following two types of the HNN-extensions:
\begin{enumerate}
  \item Extensions of centralizers, when $\phi$ is the identity automorphism of the associated subgroup.
  \item Extension of conjugacy classes, when $C_i$ and $\phi(C_i)$ are not conjugate in the base group $G_i$.
\end{enumerate}
 Let $U=\{u_1,\dots,u_b\}\subset G_1$ be shortest representatives of those conjugacy classes of elements of the free group whose centralizers are extended in $G$. Here ``shortest" means shortest with respect to the length function; note that in the free group $G_1$ $l(u)$ equals the word length $|u|$ of $u$. So,
$$C_{G_1}(u_i)\subsetneqq C_{G}(u_i)\quad i=1,2,\dots,b,$$ and our choice of the elements of $U$ ensures that the centralizer $C_{G_1}(u_i)=\langle u_i\rangle$ is cyclically reduced. By a series of HNN-extensions we extend the centralizer $C_{G_1}(u)$ to $C_{G}(u)=\langle u,t_1,\dots,t_m\rangle$, for each $u\in U$; the sets of stable letters for distinct elements of $U$ are disjoint, and their number $m$ depends on $u$. By Theorem~\ref{thm:rankabelian}, we can  assume that the inequality $ht(t_j)<ht(t_{j+1})$ holds for all $j=1,2,\dots,m-1$. Furthermore, consider all those $u\in U$ whose conjugacy classes $[u]_{G_1}$ in $G_1$ are extended in $G$, so that $[u]_{G_1}\subsetneqq[u]_{G}$. For every such $u$, we consider in $G_i$ the elements of $G_i$ that are conjugate to $u$ in $G$ but not in $G_i$. These are the elements $v$ that satisfy both $[u]_{G_i}\neq[v]_{G_i}$ and $[u]_G=[v]_G$. Let $V_i=\{v^{(i)}_1,\dots,v^{(i)}_{q_i}\}$ be the set of shortest representatives of those conjugacy classes $[v]_{G_i}$ in $G_i$. We call $u$ and the elements of $V_i$ \emph{principal elements}. Note that $V_r=\varnothing$ and $$V_1\supseteqq V_2\supseteqq\dots\supseteqq V_{r-1}.$$
Also note that every centralizer in $G$ is an abelian subgroup of $G$, and every cyclically reduced centralizer is the  centralizer of a cyclically reduced element, according to Lemma~\ref{lem:reducedcentralizer}. It follows that in every associated subgroup of the HNN-extensions in the statement of Theorem~\ref{thm:main2} one can choose a principal element as above.

Theorem~\ref{thm:main2} describes how $G$ is obtained from $G_1$ by a sequence of HNN-extensions. We slightly deviate from the procedure in that at every step $i$ we choose the associated subgroups of HNN-extensions to be \emph{all} those, and only those, that are the centralizers of \emph{principal} elements. More precisely, whenever we extend the centralizer of $u$ at a step $j$, we extend the centralizer of $v$ for every $v\in V_{j+1}$, so that $C_{G_j}(u)\simeq C_{G_j}(v)$, which we do not necessarily do when following the process of Theorem~\ref{thm:main2}. (Note that if $v\in[u]_G$ then $C_{G}(v)\simeq C_{G}(u)=\langle u,s_1,\dots,s_m\rangle$.)
Eventually, we extend the centralizer $C_{G_1}(v)$ to $C_{G}(v)=\langle v,t_1,\dots,t_m\rangle$, for each $v\in V_1$; the sets of stable letters are all disjoint. We shall always assume that $[\tilde{u} s_j]_G=[\tilde{v} t_j]_G$, for some $\tilde{u}\in\langle u,s_1,\dots,s_{j-1}\rangle$ and $\tilde{v}\in\langle v,t_1,\dots,t_{j-1}\rangle$, for all $j$.

Having extended the conjugacy classes of elements from $G_1$, we apply the same procedure to $G_2$, then to $G_3$, etc. For each one of these $G_i$, we look for elements whose centralizers do not contain elements of height less than $i$; shortest representatives of their conjugacy classes will have height $i$.

By changing the process, we might have added more stable letters than appear in Theorem~\ref{thm:main2}. We need to determine the value of the length function  $l$ at each one of the new elements. The new generators, that we might have introduced, correspond to existing elements of $G$ written as products in the old generators, and we  assign the lengths of the new generators correspondingly. However, for the assertion (6) to be true, we need to achieve a little more. In some cases, we will replace the obtained generators of $C_{i,j}$ by  new stable letters whose length will be adjusted.

We proceed as follows. Let $C_{i,j}$ from the statement of theorem~\ref{thm:main2} be conjugate to $C_{G}(u)$ in $G$. Fix a generator $c_k^{(i,j)}$ of $C_{i,j}$. It follows from the construction that  $c_k^{(i,j)}=wp$ for some stable letter $p$ and $w\in C_{i,j}$ so that $l(w)\ll l(p)$. We collect all the stable letters $p_1,p_2,\dots$ that appear as factors in the conjugates of $c^{i,j}$ in all the HNN-extensions with $C_{i,j}$ as an associated subgroup. In the finite set of elements, that we obtain in this way, we find an element of the shortest length $L_t$. In $\langle u,s_1,\dots,s_{k}\rangle$, there is a unique element $y$ of length $l(y)=L_t$; we choose this element to be a new generator: $y_k:=y$. We do this for all the generators in every centralizer $C_G(u)$ with $u\in U$. Then we express all the stable letters in the generating set of $G_i$ in terms of the new stable letters, and adjust the elements of $G$ accordingly. Note
  that since we have chosen $y_k$ to have the minimum length, every stable letter $p$ in the conjugates of $C_{i,j}$ with $ht(p)=ht(y_k)$ will be replaced by the product $\tilde{u}y_k$ where for all $l$, $[x,\tilde{u}^lx]\subset A_{y_k}$ and $\tilde{u}$ translates points of those common segments to the same direction as $y_k$ does.

The generating set $Y$ that we obtain is different from the generating set $Z$ obtained in theorem~\ref{thm:main2}; for instance, $Y_0$ may contain more elements than $Z_0$. However, the elements of $Z_+$ and those of $Y_+$ can only differ by infinitesimal factors, hence $Y$ is also reduced.

(6) By Lemma~\ref{lem:reductioninnormalforms}, we can assume that for $w_1,\dots,w_r$, whose centralizers are extended in $G$, the length $l(w_i)$ is the sum of the lengths of the old letters. The choice of the new generators for the abelian subgroups $C_{i,j}$ of $G$ does not lead to any length reduction in the words $w_1,\dots,w_r$. Therefore $l(w_j)$ remains equal to the sum of the lengths of the new letters.
\end{proof}

\subsection{Weighted word metric}

Let $G$ be a group equipped with a length function $l$, and let $Y$ be a finite generating set for $G$. In general, the equality $l(u)=l(v)$ for $u,v\in G$ does not imply that the length of $u$ equals the length of $v$ in the word metric on $G$. However, for a regular $\mathbb{Z}^n$-free group $G$, given a finite set of pairs of elements $(u_i,v_i)$ of $G$ with $l(u_i)=l(v_i)$ for all $i$, we can assign positive integer weights to the generators in $Y^{\pm 1}$ so that the length of $u_i$ will equal the length of $v_i$ in the weighted word metric $wm\colon G\rightarrow\mathbb{Z}^+\cup\{0\}$. More precisely, suppose we have assigned a weight $wm(g)>0$, to every generator $g\in Y$. We set $wm(1)=0$ and $wm(g^{-1})=wm(g)$ by definition, and for an arbitrary $h\in G$ we define the weighted length of $h$ as follows:
\[
wm(h)=\min_{g_1\dots g_{m}=h}\left\{\sum_{i=1}^{m}wm(g_i)\right\},
\]
where the minimum is taken over all the words in the free group on $Y$ that represent $h$. It is straightforward to check that $wm$ is indeed a metric.

There is a weighted Cayley graph of a group $G$, associated with the weighted metric $wm$. In the weighted graph the length of an edge labeled by a generator $g\in Y$ is $wm(g)$. For an arbitrary element $h\in G$, its weighted length is the shortest (weighted) distance from 1 to $h$ in the weighted Cayley graph of $G$.

In the proof of Key Lemma below we use the following result.
\begin{prop}\cite{GK} \label{GK} Any two nontrivial ordered abelian groups
satisfy the same existential sentences in the language
$L=\{0,+,-,<\}.$
\end{prop}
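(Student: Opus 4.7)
The plan is to reduce the claim to the completeness of the theory of nontrivial divisible ordered abelian groups, combined with a homogeneity/scaling argument that passes between an ordered abelian group and its divisible hull.

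First, I would put an arbitrary existential sentence $\phi$ in $L=\{0,+,-,<\}$ into a normal form. After rewriting in disjunctive normal form, it suffices to handle sentences of the form $\exists \vec{x}\, \bigwedge_j \alpha_j(\vec{x})$, where each $\alpha_j$ is either $\sum_k n_{jk}x_k = 0$ or $\sum_k n_{jk}x_k > 0$ with integer coefficients: negations of atomics, $t\neq 0$ and $t\leq 0$, split as $t>0 \vee t<0$ and $t<0 \vee t=0$ respectively, producing only further disjuncts. Crucially, since the only constant symbol in $L$ is $0$, every $\alpha_j$ is homogeneous of degree one in $\vec{x}$.

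Next, I would show that for any nontrivial ordered abelian group $G$, a conjunctive system $\Sigma(\vec{x})$ of the above form is satisfiable in $G$ if and only if it is satisfiable in the divisible hull $G \otimes_{\mathbb{Z}} \mathbb{Q}$, viewed as an ordered $\mathbb{Q}$-vector space (the order extends uniquely). The forward direction follows from the canonical embedding $G \hookrightarrow G \otimes \mathbb{Q}$. Conversely, given a witness $\vec{y} \in (G \otimes \mathbb{Q})^n$, pick a positive integer $N$ clearing all denominators, so $N\vec{y} \in G^n$; by homogeneity of the constraints, $N\vec{y}$ still satisfies $\Sigma$, using that multiplication by $N>0$ preserves both equations and strict inequalities.

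Finally, I would invoke the classical theorem (due to Tarski, via quantifier elimination or a back-and-forth over dense linear orders with group structure) that the theory of nontrivial divisible ordered abelian groups is complete. For any two nontrivial ordered abelian groups $G_1, G_2$, the hulls $G_1 \otimes \mathbb{Q}$ and $G_2 \otimes \mathbb{Q}$ are nontrivial divisible ordered abelian groups, hence elementarily equivalent. Chaining the three steps yields: $\phi$ holds in $G_1$ iff in $G_1 \otimes \mathbb{Q}$ iff in $G_2 \otimes \mathbb{Q}$ iff in $G_2$, which is the claim.

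The main delicate point is the scaling step: one must verify carefully that after reducing to DNF every remaining atomic (and every recast negated atomic) produces only homogeneous constraints, which relies on $L$ having no integer constants beyond $0$; also the degenerate case $\vec{y}=0$ is unproblematic (it solves all equations and fails all strict inequalities already in $G$). The completeness of the theory of nontrivial divisible ordered abelian groups is standard model theory, so this is the only step that genuinely requires argument on the algebraic side.
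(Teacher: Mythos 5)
Your proof is correct. Note that the paper does not prove this proposition at all: it is quoted from Gurevich--Kokorin (1963) as a black box, so there is no in-text argument to compare against. Your reduction is the standard one and is complete: the passage to disjunctive normal form correctly turns every literal into a homogeneous constraint $\sum_k n_{jk}x_k=0$ or $\sum_k n_{jk}x_k>0$ (this genuinely uses that $0$ is the only constant of $L$), the scaling step from the divisible hull $G\otimes_{\mathbb{Z}}\mathbb{Q}$ back to $G$ is valid because multiplication by a positive integer preserves both vanishing and strict positivity in an ordered abelian group, and the final appeal to completeness of the theory of nontrivial divisible ordered abelian groups is classical (quantifier elimination for DOAG). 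One could shorten the last step by observing that, after quantifier elimination, an existential $L$-sentence is equivalent over DOAG to a quantifier-free sentence in the constant $0$ alone, whose truth value is decided by the axioms; but elementary equivalence of the two hulls works just as well. No gaps.
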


\begin{lem} \textbf{(Key Lemma)} Let $G$ be a a regular $\mathbb{Z}^n$-free group, and let $Y=X\cup\{t_1,\dots,t_k\}$ be a generating set for $G$ as in theorem~\ref{thm:singleHNN}(6). One can assign positive integer weights to the generators of $G$ so that in every HNN-extension $t^{-1}Ct=\phi(C)$ in the statement of theorem~\ref{thm:singleHNN}, for every element $h\in C$, $wm(h)=wm(\phi(h))$.
\end{lem}
\begin{proof} Note that the associated subgroups are free abelian groups and the generating set of each one of them, introduced in theorem~\ref{thm:singleHNN}(4), is a free basis. Therefore, it suffices to arrange for the weight of every generator $u\in C$ be equal the weight of its image $v=\phi(u)$. Let $U$ denote the set of all the generators of the associated subgroups of all the HNN extensions; $U$ is a finite set. If $v,u\in U$ are as above then $l(u)=l(v)$, by the assertion~\ref{thm:singleHNN}(1). Let the words $u=g_1\dots g_{m_u}$ and $v=f_1\dots f_{m_v}$, with $g_i,f_j\in Y^{\pm 1}$, be as in theorem~\ref{thm:singleHNN}(6), so that
\[
l(u)= \sum_{i=1}^{m_u} l(g_i),\quad l(v) = \sum_{i=1}^{m_v} l(f_i).
\]
 Therefore, we can write the following finite system of equations and inequalities that hold in $\mathbb{Z}^n$:
\begin{eqnarray*}
 \sum_{i=1}^{m_u} l(g_i) = \sum_{i=1}^{m_v} l(f_i);  \qquad  l(g)>0\ \forall g\in Y,
\end{eqnarray*}
where $u$ and $v$ run over all the (pairs of) elements of $U$. Therefore, we have a finite set of formulas in the language $\{0,+,-,<\}$ satisfiable in the nontrivial ordered abelian group $\mathbb{Z}^n$. Note that we use no constants when writing these formulas.  Proposition~\ref{GK} implies that the formulas are satisfiable in $\mathbb{Z}$. This allows us to define a weighted word metric $wm\colon G\rightarrow\mathbb{Z}^+\cup\{0\}$ so as to have the system of equations and inequalities in $\mathbb{Z}$, as follows:
\begin{eqnarray}\label{e:equation}
\sum_{i=1}^{m_u} wm(g_i) = \sum_{i=1}^{m_v} wm(f_i);\qquad wm(g)>0\ \forall g\in Y.
\end{eqnarray}
 The values of $wm$ on the generating set $Y$ are chosen to satisfy the equations~(\ref{e:equation}).

It remains to show that
\begin{eqnarray}\label{e:wm}
wm(u)=\sum_{i=1}^{m_u} wm(g_i), \quad wm(v) = \sum_{i=1}^{m_v} wm(f_i), \quad \forall u,v\in U.
\end{eqnarray}
We argue by induction on the number $r$ of HNN-extensions, with the notation of theorem~\ref{thm:singleHNN}. Clearly, the equalities~(\ref{e:wm}) hold for $u,v\in U\cap G_1$, where $G_1$ is the group freely generated by $X\subset Y$ (we use the notation of theorem~\ref{thm:singleHNN}).
Now, assume that the equalities~(\ref{e:wm}) hold for all $u',v'\in U\cap G_r$. Recall that $G_{r+1}=\langle G_r,t\mid t^{-1}Ct=\phi(C)\rangle$. Let $u\in U\cap G_{r+1}$, and let $u=g_1\dots g_{m_u}$ be the normal form for $u$ in $G_{r+1}$, with shortest coset representatives of the associated subgroups. Suppose $u=g_1\dots g_{m_u}=h_1\dots h_{m}$, with $g_i, h_j\in Y$ for all $i,j$, where $h_1\dots h_{m}$ may not be a normal form for $u$. We claim that
\[
\sum_{i=1}^{m_u} wm(g_i)\leq \sum_{j=1}^{m} wm(h_j).
\]
Our proof shows that as we obtain the normal form $g_1\dots g_{m_u}$ for $u$ from $h_1\dots h_{m}$, at each step the weighted length of the product either becomes shorter or stays the same. Note that the equality $wm(w)=wm(\phi(w))$ holds for the generators of $C$, due to our choice of the function $wm$. Furthermore, since $C$ is a free abelian group and the generators of $C$ form a basis, the equality holds for all $w\in C$. Every one step reduction of $h_1\dots h_{m}$ is one of the following:
\begin{itemize}
  \item Free cancellation, which apparently decreases the weighted length.
  \item If subwords $t^{-1}wt$ or $t\phi(w)t^{-1}$ with $w\in C$ occur in the product, replace them with $\phi(w)$ or $w$, correspondingly. The weighted length decreases.
  \item Replace $t^{-1}w$ with $\phi(w)t^{-1}$ and $t\phi(w)$ with $wt$. The weighted length does not change.
  \item If $d_1$ and $d_2$ are two representatives of right cosets of $C$ (or $\phi(C)$), replace $d_1d_2$ with $d$ where $dC=d_1d_2C$ (or $d\phi(C)=d_1d_2\phi(C)$). Note that $d_1,d_2$ and $d$ are all in $G_r$, so by the induction hypothesis, $d$ is a shortest representative of its coset. Therefore, the reduction does not increase the weighted length.
\end{itemize}
\end{proof}

\section{Gluing CAT(0) spaces} \label{sec:gluing}

In this section, following the strategy introduced in~\cite{AlibegovicBestvina}, we prove that $\mathbb{Z}^n$-free groups are CAT(0).

 The first ingredient of the proof is the following statement.

\begin{prop} \cite[Proposition II.11.13 and Corollary II.11.14]{BridsonHaefliger}  \label{prop:gluing} Let $A$ be a compact locally CAT(0) metric space.
\begin{enumerate}
  \item Let $X$ be a locally CAT(0) metric space. If $\varphi,\phi\colon A\rightarrow X$ are local isometries, then the quotient of $X\coprod(A\times[0,1])$ by the equivalence relation generated by $[(a,0)\sim\varphi(a); (a,1)\sim\phi(a)],$ for all $a\in A$, is locally CAT(0).
  \item Let $X_0$ and $X_1$ be locally CAT(0) metric spaces. If $\varphi_i\colon A\rightarrow X_i$ is a local isometry for $i=0,1$, then the quotient of $X_0\coprod(A\times[0,1])\coprod X_1$ by the equivalence relation generated by $[(a,0)\sim\varphi_0(a);
(a,1)\sim\varphi_1(a)],$ for all $a\in A$, is locally CAT(0).
\end{enumerate}
\end{prop}

To state the following theorem, we should remind the definition of a geometrically coherent space, inspired by the work of Wise~\cite{WiseSectionalCurv}, and introduced in \cite{AlibegovicBestvina}.

\begin{defn}\label{defn:coherentspace} \textbf{(Geometrically coherent space)} Let $X$ be a connected locally CAT(0) space, and let $C$ be a connected subspace of $X$. $C$ is a \emph{core} of $X$ if $C$ is compact, locally CAT(0), and the inclusion $C\hookrightarrow X$ induces a $\pi_1$-isomorphism.

Let $Y$ be a connected locally CAT(0) space. $Y$ is called \emph{geometrically coherent} if every covering space $X\rightarrow Y$ with $X$ connected and $\pi_1(X)$ finitely generated has the following property. For every compact subset $K\subset X$ there is a core $C$ of $X$ containing $K$.
\end{defn}

\begin{defn} \textbf{(Maximal separated torus)}
 Let $U$ be a connected geodesic metric space and $A\subset U$ be a $k$-torus, so that $A=c_1\times c_2\times\dots\times c_k$ for some non-homotopic simple closed curves $c_1, c_2,\dots, c_k\subset U$. We say that $A$ is a \emph{maximal torus in} $U$ if $A$ is not contained in any bigger torus $T\subset U$. A maximal torus $A$ is \emph{separated in} $U$ if for any torus $\hat A\subset U$ so that $A\cap\hat A\neq\emptyset$ it follows that $\hat A\subseteq A$.
\end{defn}
We are interested in the case when every torus of $U$ is contained in a maximal torus, and all tori maximal in $U$ are separated in $U$. In the language of fundamental groups, this means that every abelian subgroup of $\pi_1(U)$ is contained in a maximal abelian subgroup, and maximal abelian subgroups are conjugacy separated; in this case $\pi_1(U)$ is called a CSA group. It follows from~ \cite[Proposition 4.13]{Bass} (cf. also~\cite[Chapter 3, Lemma 3.9]{ChiswellBook}) that $\mathbb{Z}^n$-free groups are CSA.

In the statement below, by a 1-torus we mean a simple closed curve.

\begin{thm}\label{thm:finitecore} Let $U$ be a geometrically coherent locally CAT(0) space, let $T$ be a $k$-torus for some integer $k\geq 1$, and let $\varphi\colon T\rightarrow U$ and $\phi\colon T\rightarrow U$ be local isometries. Furthermore, assume that both images $\varphi(T)$ and $\phi(T)$ are locally convex, maximal and separated in $U$. We also assume that if $\varphi(T)\neq\phi(T)$ then the $\pi_1(U)$-orbits of $\varphi(T)$ and $\phi(T)$ are distinct.
%
%
 Let $Y$ be the quotient of
\[
 U\coprod T\times[0,1]
\]
by the equivalence relation generated by $$[(a,0)\sim\varphi(a),\ (a,1)\sim\phi(a),\forall a\in T].$$
Then $Y$ is geometrically coherent.
\end{thm}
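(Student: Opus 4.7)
The plan is to follow the inductive strategy of Alibegović--Bestvina~\cite{AlibegovicBestvina}: view $Y$ as an HNN extension of $U$ along $T$ with the cylinder $T\times[0,1]$ providing the stable letter, and build a core of $X$ piece by piece using a Bass--Serre decomposition. Since $\varphi$ and $\phi$ are local isometries, $T$ is compact and locally CAT(0), and $U$ is locally CAT(0), Proposition~\ref{prop:gluing} shows that $Y$ is locally CAT(0). The content of the theorem is therefore entirely in constructing, for a given connected cover $p\colon X\to Y$ with $H=\pi_1(X)$ finitely generated and a given compact $K\subset X$, a compact locally CAT(0) subspace $C\subset X$ containing $K$ such that $C\hookrightarrow X$ is a $\pi_1$-isomorphism.

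First I would pull the graph-of-spaces structure of $Y$ back along $p$, obtaining a decomposition of $X$ whose vertex spaces are connected covers of $U$, whose edge spaces are covers of $T$, and whose underlying graph is $\Gamma:=H\backslash\mathcal{T}$ for $\mathcal{T}$ the Bass--Serre tree. Let $\mathcal{T}_H\subset\mathcal{T}$ be the minimal $H$-invariant subtree; finite generation of $H$ gives that $\Gamma_H:=H\backslash\mathcal{T}_H$ is a finite subgraph of $\Gamma$. The maximality and separation of $\varphi(T)$ and $\phi(T)$ translate, via $\pi_1$, to the statement that $\varphi_*\pi_1(T)$ and $\phi_*\pi_1(T)$ are malnormal maximal abelian subgroups of $\pi_1(U)$ which, when $\varphi(T)\neq\phi(T)$, lie in distinct conjugacy classes. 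A standard Bass--Serre malnormality argument then shows that the vertex stabilizers $H_v=H\cap\mathrm{Stab}(v)$ are finitely generated for every $v$ in $\mathcal{T}_H$, and similarly for edges. Consequently each vertex space $X_v$ with $v$ a vertex of $\Gamma_H$ is a connected cover of $U$ with finitely generated $\pi_1$, and each edge space is a cover of the torus $T$, hence itself a torus times Euclidean space.

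Now I would enlarge $K$ to a compact set $K'$ that meets only finitely many vertex spaces $X_{v_1},\dots,X_{v_r}$ and finitely many edge spaces and that contains all of $\Gamma_H$. For each incident edge space $X_e$ choose a compact core $D_e$ (a compact subtorus) containing $K'\cap X_e$. Then, invoking geometric coherence of $U$ applied to each cover $X_{v_i}\to U$, choose a compact core $C_{v_i}$ of $X_{v_i}$ containing $K'\cap X_{v_i}$ together with the images in $X_{v_i}$ of all adjacent $D_e$. Gluing the $C_{v_i}$ along the $D_e$ via the cylinder construction and applying Proposition~\ref{prop:gluing} iteratively produces a compact locally CAT(0) subspace $C\subset X$ containing $K$; a van Kampen computation using the graph-of-spaces structure of $C$ and the finiteness of $\Gamma_H$ shows $\pi_1(C)\to\pi_1(X)$ is an isomorphism. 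The main obstacle is the middle step: extracting finite generation of the vertex stabilizers $H_v$ from the malnormality and non-conjugacy that the maximality and separation hypotheses provide. Without this control, the vertex pieces could have infinitely generated $\pi_1$ and no compact core would exist.
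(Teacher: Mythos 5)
Your proposal follows essentially the same route as the paper: decompose the cover $X$ as a graph of spaces over $U$ and $T$, use finite generation of $\pi_1(X)$ to reduce to a finite subgraph carrying all of $\pi_1(X)$ (the paper does this by an exhaustion by finite subgraphs whose fundamental groups stabilize, you do it via the minimal invariant subtree --- these are interchangeable), take compact cores of the vertex spaces using geometric coherence of $U$, and glue them to the compact pieces of the edge spaces via Proposition~\ref{prop:gluing}. One correction to where you locate the difficulty: finite generation of the vertex groups does \emph{not} come from malnormality. It is the standard Bass--Serre fact that a finitely generated group acting cocompactly on a tree with finitely generated edge stabilizers has finitely generated vertex stabilizers; here the edge groups are free abelian of rank at most $k$ (or trivial) and the quotient graph has been reduced to a finite one, so this applies directly. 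The maximality/separation hypotheses (and the non-conjugacy of the $\pi_1(U)$-orbits of $\varphi(T)$ and $\phi(T)$) are used for a different purpose: they guarantee that the boundary tori of distinct edge spaces are identified with \emph{disjoint} locally convex subsets of the vertex spaces, which is what allows you to choose cores of the vertex spaces containing all the relevant attaching loci and to apply the gluing proposition cleanly. With that reattribution your argument matches the paper's.
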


\begin{proof} Let us be given a covering space $p\colon X\rightarrow Y$ with $X$ connected and $\pi_1(X)$ finitely generated, and a compact subset $K\subset X$. Then $X$ can be viewed as a graph of spaces with vertex spaces the components of $p^{-1}(U)$ and edge spaces the components of $p^{-1}(T\times{\frac 12})$.
Let $G=\pi_1(X)$ be the fundamental group of $X$, and let $\Gamma$ be the associated graph of groups. Then $\pi_1(\Gamma)=\pi_1(X)=G$; note that $G$ is finitely generated. Every edge group in $\Gamma$ is either free abelian of rank less or equal to $k$, or trivial. Each edge space is $E=A\times [0,1]$ where $A$ is the product of lines and circles:
$$A=c_1\times c_2\times\dots\times c_p\times\mathbb{R}^q,$$
where $p+q=k\geq 0$ and each $c_i$ is a circle. In the vertex space, circles are identified with simple closed curves, and lines are identified with lines, so that for every edge space $E$, each one of $A\times\{0\}$ and $A\times\{1\}$ is identified with either a torus, the product of a torus with lines, or the product of several lines. It may happen that both $A\times\{0\}$ and $A\times\{1\}$ are glued to the same vertex space.

Note that our assumptions on the images $\varphi(T)$ and $\phi(T)$ of $T$ imply that if $\varphi(T)\neq\phi(T)$ then the boundary components of distinct edge spaces in $X$ are identified with disjoint subsets of the vertex spaces. This is because maximal tori of $U$ are separated in $U$ and the $\pi_1$-orbits of $\varphi(T)$ and $\phi(T)$ are disjoint.

First, assume that the graph of spaces $X$ is finite; in particular, $X$ contains finitely many vertex and edge spaces. In this case, the graph of groups $\Gamma$ is also finite. Note that every edge group of $\Gamma$ is finitely generated. It follows that every vertex group of $\Gamma$ is finitely generated as well. Therefore, in any vertex space of $X$, every compact subset is contained in a core. Given the compact set $K$ in $X$, to find a core $C\supset K$ in $X$, we first select the product of circles $c_j$ and segments $[a_l,b_l]$ with the unit interval $I=[0,1]$
$$B_E=c_1\times\dots\times c_p\times[a_1,b_1]\times\dots\times[a_q,b_q]\times[0,1]$$
inside each product $E=c_1\times\dots\times c_p\times\mathbb{R}^q\times[0,1]$ of $p$ circles and $q$ lines with $I$, so that $K\cap E\subset B_E$.
For every vertex space $V$ of $X$ we choose a core $C(V)$ so that it contains the intersection $V\cap K$ with $K$; $C(V)$ contains the intersection $V\cap B_E$ for every edge $E=A\times I$ in $X$; and finally, $C(V)$ contains the intersection $V\cap T$ for every generalized annulus $GA=T\times I$ in $X$.
The core $C$ is the union of the following spaces:
\begin{itemize}
 \item the cores $C(V)$ of all vertex spaces,
 \item all the generalized annuli $GA$ in $X$, and
 \item all the products $B_E$ in $X$.
\end{itemize}
 $C$ is clearly compact as the finite union of compact spaces. Note that the disjoint union of the cores $C(V)$ of all vertex spaces is a locally CAT(0) metric space, and that every generalized annulus as well as every product $B_E$ is a compact locally CAT(0) space. It follows that $C$ is locally CAT(0), according to Proposition~\ref{prop:gluing}. $C$ is connected by construction. Furthermore, the graph of groups $\Gamma_C$ corresponding to $C$ has the vertex groups isomorphic to those of $\Gamma$, and the same edge groups as $\Gamma$, with the same monomorphisms, so $$\pi_1(C)=\pi_1(\Gamma_C)=\pi_1(\Gamma)=\pi_1(X).$$

Now, consider the case when $X$ is an infinite graph of spaces. In this case the corresponding graph $\Gamma$ is also infinite. Both $X$ and $\Gamma$ are countable graphs, since $G$ is a countable group. We need a preliminary step. If $K$ intersects vertex spaces, pick an arbitrary vertex space $V$ with $V\cap K\neq \emptyset$; otherwise, choose any vertex space $V$ in $X$. Starting with $V$, build an ascending chain of finite connected subspaces $X_0\subset X_1\subset\dots\subset X$ of $X$ which exhaust $X$, and consider the corresponding subgraphs $\Gamma_0\subset\Gamma_1\subset\dots\subset \Gamma$ of $\Gamma$. The fundamental groups $G_0\subset G_1\subset\dots$ of these subgraphs form an ascending chain of subgroups of $G=\pi_1(\Gamma)$. The subgraphs $\Gamma_i$ exhaust $\Gamma$; therefore, the subgroups $G_i$ exhaust $G$. Since $G$ is finitely generated, the chain of subgroups must stabilize after finitely many steps: $$G_0\subset G_1\subset\dots G_n=G_{n+1}=\dots=G.$$ Therefore, the finite subgraph $\Gamma_n$ has the same fundamental group as $\Gamma$, and so for the corresponding subspace $X_n$ we have that $\pi_1(X_n)=\pi_1(X)$. Given a compact subset $K$, we may need to replace $X_n$ by a larger subspace $X_m$ so as to include $K$. Clearly, a core in $X_m$ is a core in $X$, so we can apply to $X_m$ the procedure described above to find a core.

\end{proof}

\begin{thm}\label{thm:regularZnfreecoherent} Let $G$ be a finitely generated group with a free regular $\mathbb{Z}^n$-length function. Then $G$ is the fundamental group of a geometrically coherent space. In particular, every finitely generated subgroup of $G$ is CAT(0).
\end{thm}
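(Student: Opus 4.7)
The plan is induction on the number $r$ of HNN-extensions appearing in the decomposition $G_1<G_2<\cdots<G_r=G$ provided by Theorem~\ref{thm:singleHNN}, building along the way an increasing chain $Y_1\subset Y_2\subset\cdots\subset Y_r=Y$ of geometrically coherent spaces with $\pi_1(Y_i)=G_i$. For the base case, I realize the finitely generated free group $G_1$ as $\pi_1(Y_1)$ where $Y_1$ is a wedge of circles whose circumferences are the integer weights assigned to the free generators by the Key Lemma. A graph is locally CAT(0) and geometrically coherent, since every connected cover is a graph and a finitely generated fundamental group of a graph has a finite subgraph as core containing any prescribed compact subset.

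For the inductive step, suppose $Y_i$ has been constructed with $\pi_1(Y_i)=G_i$, with every chosen generator of $G_i$ realized as a closed geodesic loop of the prescribed weighted length. The extension $G_{i+1}=\langle G_i,s_i\mid s_i^{-1}C_is_i=\phi_i(C_i)\rangle$ is along the maximal abelian subgroup $C_i=\langle c_1^{(i)},\dots,c_{m_i}^{(i)}\rangle$, which is free abelian of rank $m_i\le n$ by Theorem~\ref{thm:rankabelian}. By Corollary~\ref{cor:coherenceinabeliansbgp} the generators of $C_i$ and of $\phi_i(C_i)$ can be chosen with pairwise coherent directions, so Lemma~\ref{lem:sumoflettersthroughbase} gives that the weighted length of every element of $C_i$ is the sum of the weighted lengths of the generators appearing in its abelian normal form; combining this with Theorem~\ref{thm:singleHNN}(6) and the Key Lemma, the flat $m_i$-torus $T_i$ with side lengths $wm(c_k^{(i)})=wm(\phi_i(c_k^{(i)}))$ admits two local isometries $\varphi,\phi\colon T_i\to Y_i$ representing $C_i$ and $\phi_i(C_i)$. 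I then set $Y_{i+1}$ to be the quotient of $Y_i\coprod T_i\times[0,1]$ by the relation $(a,0)\sim\varphi(a),\,(a,1)\sim\phi(a)$, so that van Kampen gives $\pi_1(Y_{i+1})=G_{i+1}$.

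To conclude, I verify the hypotheses of Theorem~\ref{thm:finitecore} at each stage. Since $G$ is relatively hyperbolic with abelian parabolics, it is CSA, so maximal abelian subgroups of $G_i$ are malnormal, which forces $\varphi(T_i)$ and $\phi(T_i)$ to be locally convex, maximal and separated in $Y_i$; condition (5) of Theorem~\ref{thm:singleHNN} guarantees that whenever $\varphi(T_i)\ne\phi(T_i)$ their $\pi_1(Y_i)$-orbits are distinct. Thus Theorem~\ref{thm:finitecore} applies and $Y_{i+1}$ is geometrically coherent. Iterating $r$ times yields the desired space $Y=Y_r$ with $\pi_1(Y)=G$. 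For the ``in particular'' clause, any finitely generated subgroup $H\le G$ is $\pi_1(X)$ for some connected cover $X\to Y$; by geometric coherence $X$ contains a compact locally CAT(0) core $C$ with $\pi_1(C)\cong H$, and $H$ acts properly discontinuously and cocompactly on the CAT(0) universal cover $\widetilde C$.

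The main obstacle I anticipate is the self-consistency of the torus realization at the inductive step: one must show that the generators provided by Theorem~\ref{thm:singleHNN}(4) have coherent directions \emph{and} give an embedding of the product as a locally isometric torus, rather than a merely immersed one, and that the resulting torus is genuinely maximal and separated in $Y_i$ rather than only in $G_i$. The former relies on the additivity of weighted length along coherent products (Lemma~\ref{lem:sumoflettersthroughbase} together with the Key Lemma), while the latter is precisely what the CSA property buys us and is the reason conditions (4)--(5) of Theorem~\ref{thm:singleHNN} have been built into the decomposition.
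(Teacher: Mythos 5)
Your proposal is correct and follows essentially the same route as the paper: induct along the chain of single HNN-extensions from Theorem~\ref{thm:singleHNN}, realize each associated maximal abelian subgroup by a flat torus mapped in by two local isometries, glue via Proposition~\ref{prop:gluing}, and invoke Theorem~\ref{thm:finitecore} at each stage. Your explicit use of the Key Lemma's integer weights to fix the torus side lengths, and of the CSA property to verify the maximality/separation hypotheses of Theorem~\ref{thm:finitecore}, only makes explicit what the paper's proof leaves as ``readily seen.''
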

\begin{proof} We build a sequence of spaces corresponding to the chain of HNN-extensions in theorem~\ref{thm:singleHNN}. Each space is a 2-dimensional presentation complex of the corresponding group.
$U_1$ is the wedge of circles corresponding to the free group $F=G_1$. Note that every maximal abelian subgroup $C$ of $G_1$ is cyclic; let $C=\langle c^{(1)}\rangle$. Let $\alpha$ and $\beta$ be the reduced paths in $U_1$ corresponding to $c^{(1)}$ and $\phi(c^{(1)})$. Note that the lengths $l(\alpha)$ and $l(\beta)$ are equal. We rescale the metric on $S^1$ so as to make the total length of the circle equal $l(\alpha)$,  and let $\tau\colon S^1\rightarrow \alpha$ and $\sigma\colon S^1\rightarrow \beta$ be local isometries. $U_2$ is the quotient of
\[
 U_1\coprod S^1\times[0,1]
\]
by the equivalence relation generated by $[(a,0)\sim\tau(a),\ (a,1)\sim\sigma(a),\forall a\in S^1]$.
If $c^{(1)}\neq \phi(c^{(1)})$, $c^{(1)}$ and $\phi(c^{(1)})$ are not conjugate in $G_1$; therefore, in this latter case  the $\pi_1(U_1)$-orbits of $\alpha$ and $\beta$ are disjoint. It can be readily seen that all the conditions of Theorem~\ref{thm:finitecore} are satisfied. It follows that $U_2$ is geometrically coherent.

To proceed, assume that $G_i=\pi_1(U_i)$, where $U_i$ is a geometrically coherent space. Let $c^{(i)}_1, \ldots, c^{(i)}_{m_{i}}$ be generators of a maximal abelian subgroup $C_i$ of $G_i$, chosen so that the conditions of theorem~\ref{thm:singleHNN} are satisfied. In particular, the generators and their images under $\phi$ form bases for $C_i$ and $\phi(C_i)$.

Let $\alpha_j$ and $\beta_j$ be the reduced paths in $U_i$ corresponding to $c_j^{(i)}$ and $\phi(c_j^{(i)})$. Note that the lengths $l(\alpha_j)$ and $l(\beta_j)$ are equal. We define $$T=\prod_{j=1}^{m_i} S^1_j$$ and rescale the metric on each $S^1_j$ so as to make the total length of the circle equal $l(\alpha_j)$. This defines the metric on $T$. Let $\tau\colon S^1_j\rightarrow \alpha_j$ and $\sigma\colon S^1_j\rightarrow \beta_j$ be local isometries. The mappings $\tau$ and $\sigma$, defined on the 1-skeleton of $T$, extend to immersions and, therefore, to local isometries $T\rightarrow \tau(T)$ and $T\rightarrow \sigma(T)$. $U_{i+1}$ is the quotient of
\[
 U_i\coprod T\times[0,1]
\]
by the equivalence relation generated by $$[(a,0)\sim\tau(a),\ (a,1)\sim\sigma(a),\forall a\in T].$$
All the conditions of Theorem~\ref{thm:finitecore} are satisfied. It follows that $U_{i+1}$ is geometrically coherent.

 By induction on the number $n$ of HNN-extensions in theorem~\ref{thm:singleHNN}, we obtain the claim.
\end{proof}

By~\cite{KharlampovichMyasnikov11}, every group acting freely on a $\mathbb{Z}^n$-tree embeds by a length preserving monomorphism into a regular $\mathbb{Z}^n$-free group. Therefore, we have the following result.

\begin{thm}\label{thm:ZnfreeCAT0} Let $G$ be a finitely generated group acting freely on a $\mathbb{Z}^n$-tree. Then  $G$ acts properly and cocompactly on a CAT(0) space with isolated flats.
\end{thm}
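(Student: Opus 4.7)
The plan is to bootstrap from the regular case, which is handled by Theorem~\ref{thm:regularZnfreecoherent}. First I would invoke the length-preserving embedding theorem of~\cite{KharlampovichMyasnikov11} to embed the given finitely generated $\mathbb{Z}^n$-free group $G$ into a finitely generated regular $\mathbb{Z}^m$-free group $H$, for some $m$. Since the embedding is length-preserving, $G$ sits as a finitely generated subgroup of $H$, and $H$ now enjoys all of the structural properties we have developed for regular $\mathbb{Z}^m$-free groups.

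By Theorem~\ref{thm:regularZnfreecoherent} we may write $H=\pi_1(U)$ for some connected geometrically coherent, and in particular locally CAT(0), space $U$. Let $p\colon X\to U$ be the connected covering corresponding to the subgroup $G\leq H$, so that $\pi_1(X)\cong G$ is finitely generated. By Definition~\ref{defn:coherentspace} applied to $X$ (with any chosen compact subset), there is a core $C\subset X$: a connected compact locally CAT(0) subspace whose inclusion into $X$ induces a $\pi_1$-isomorphism $\pi_1(C)\cong G$. Its universal cover $\widetilde{C}$ is then a CAT(0) space by the Cartan--Hadamard theorem for locally CAT(0) spaces, and the deck transformation action of $G$ on $\widetilde{C}$ is properly discontinuous and cocompact since $C$ is compact.

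It remains to upgrade this action to one whose ambient CAT(0) space has isolated flats. As recalled in the introduction, every finitely generated $\mathbb{Z}^n$-free group is hyperbolic relative to its maximal abelian subgroups, via Guirardel's structure theorem combined with Dahmani's combination theorem~\cite{Dahmani}. Thus $G$ satisfies the hypotheses of the Hruska--Kleiner criterion~\cite[Theorem~1.2.1]{HruskaKleiner}, which promotes any properly discontinuous cocompact CAT(0) action of a relatively hyperbolic group with abelian parabolics to one with isolated flats. Applying this to the action of $G$ on $\widetilde{C}$ yields the theorem.

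The main conceptual step is the reduction to the regular case via the length-preserving embedding, since once $G$ is realised inside such an $H$ the heavy lifting has already been done in Theorem~\ref{thm:regularZnfreecoherent}; the very definition of geometric coherence was designed to guarantee the required compact model $C$ for arbitrary finitely generated subgroups like $G$. The only remaining subtlety I anticipate is bookkeeping, namely verifying that the proper cocompact CAT(0) action on $\widetilde{C}$ is exactly in the form required as input by the Hruska--Kleiner criterion, which for a finitely generated group acting by deck transformations on the universal cover of a compact locally CAT(0) space is standard.
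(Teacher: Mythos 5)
Your proposal is correct and follows essentially the same route as the paper: the length-preserving embedding of~\cite{KharlampovichMyasnikov11} into a regular $\mathbb{Z}^m$-free group, the core of the corresponding cover supplied by geometric coherence (Theorem~\ref{thm:regularZnfreecoherent}), the universal cover of that core as the CAT(0) space with the proper cocompact $G$-action, and the Hruska--Kleiner criterion combined with relative hyperbolicity to conclude isolated flats. The paper leaves these steps implicit, so your write-up is simply a fuller version of the intended argument.
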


\subsection*{Acknowledgments} The authors wish to thank Denis Serbin for helpful conversations. We are deeply grateful to the referee for the numerous remarks that helped improve the exposition considerably.
\bibliographystyle{plain}


%
\end{document}